\def\kl{{\rm KL}}
\newtheorem{theorem}{Theorem}
\newtheorem{lemma}[theorem]{Lemma}
\newtheorem{corollary}[theorem]{Corollary}
\newtheorem{proposition}[theorem]{Proposition}
\newtheorem{conjecture}[theorem]{Conjecture}
\newtheorem{question}[theorem]{Question}
\newtheorem{definition}{Definition}
\newcommand{\dest}{{D_{EST}(P \parallel Q)}}
\title{Kullback–Leibler divergence and primitive non-deficient numbers}
\author{Joshua Zelinsky\footnote{Department of Mathematics, Hopkins School, New Haven, CT. USA jzelinsky@hopkins.edu}, Kyle Zhang\footnote{Hopkins School, New Haven, CT. USA, kzhang25@students.hopkins.edu}}
\date{}
\begin{document}

\maketitle

\begin{abstract} Let $H(n) = \prod_{p|n}\frac{p}{p-1}$ where $p$ ranges over the primes which divide $n$. It is well known that if $n$ is a primitive non-deficient number, then  $H(n) > 2$. We examine inequalities of the form $H(n)> 2 + f(n)$ for various functions $f(n)$ where $n$ is assumed to be primitive non-deficient and connect these inequalities to applying the Kullback-Leibler divergence to  different probability distributions on the set of divisors of $n$.
    
\end{abstract}

\section{Introduction}

Let $P$ and $Q$ be discrete probability distributions on the same sample space $\mathcal{X}$. Then the Kullback-Leibler divergence is defined as

\begin{equation}
    D_\kl(P \parallel Q) = \sum_{ x \in \mathcal{X} } P(x)\ \log\left(\frac{\ P(x)\ }{ Q(x) }\right)\ \label{def of KL divergence}.
\end{equation}

The Kullback-Leibler divergence is one way to measure the dissimilarity between two probability distributions on the same sample space. Less similar distributions have greater $D_\kl(P \parallel Q)$.    Note that $D_\kl$ is not a metric on the probability distributions since it fails both the needed symmetry and the triangle inequality. However, $D_\kl(P \parallel Q)$ is non-negative. Moreover,  $D_\kl(P \parallel Q) =0$ if and only if $P$ and $Q$ are the same probability distribution. 

The main idea  of this article is that certain numbers have two natural probability distributions on their divisors. We can then apply the positivity of the Kullback-Leibler divergence to those distributions to derive various inequalities.  Let $\phi(n)$ be the Euler totient function which counts the number of positive integers which are less than or equal to $n$ and relatively prime to $n$. Then A classical identity is the statement that

\begin{equation} \sum_{d|n} \phi(d) = n,
\end{equation}
or equivalently 

\begin{equation} \sum_{d|n} \frac{\phi(d)}{d} = 1.
\end{equation}

Thus, all integers $n$ have a probability distribution  on $\mathcal{X} = \{d>0:d \mid n\}$ which arises naturally from applying $\phi(d)$ to the divisors.

Let $\sigma(n)$ be the sum of the divisors of $n$. For example, $\sigma(10) =1 +2+5+10=18$. Recall that a number $n$ is said to be perfect if $\sigma(n)=2n$, said to be deficient if $\sigma(n) <2n$, and said to be abundant if $\sigma(n) > 2n$. We will also refer to $n$ as non-deficient if $n$ is perfect or abundant. The first few perfect numbers are 6, 28, 496, and 8128. One of the oldest unsolved problems is whether there are any odd numbers which are perfect. Note that the statement ``$n$ is perfect'' is the same as saying that

\begin{equation} \sum_{d|n} \frac{d}{2n} = 1. \label{probability distribution phrasing for n perfect}
\end{equation}

Equation (\ref{probability distribution phrasing for n perfect}) is a probability distribution on the sample space $\mathcal{X}$ of positive divisors of $n$.  Thus, if $n$ is perfect, there are two natural  probability distributions on its divisors. If $P$ is the probability distribution from $n$ being perfect, and $Q$ is the probability distribution arising from the Euler totient function, then the Kullback-Liebler divergence of these two distributions then is

\begin{equation}
D_\kl(P \parallel Q) = \sum_{d|n} \frac{d}{2n} \log \frac{d}{2\phi(d)} = \frac{1}{2n}\sum_{d|n} d\log \frac{d}{2\phi(d)}    \label{KL for divergence}.
\end{equation} Motivated by this observation, we will define the function $\kl(n)$ by  \begin{equation}
\kl (n) = \sum_{d|n} d\log \frac{d}{2\phi(d)}  \label{KL(n) def}. \end{equation}
 The function  $\kl (n)$ will be the main focus of the remainder of this paper. A skeptical reader may ask if this is simply an arbitrary choice to combine these two probability distributions that will not yield any substantial insight. However, there are multiple connections between $\phi(n)$ and the sum of the divisors of $n$. The first connection is through Pillai's function. Pillai's function,  $Pi(n),$ is defined by 

$$Pi(n)= \sum_{k=1}^n \mathrm{gcd}(k,n). $$ Pillai \cite{Pillai} showed  that $Pi(n)$ is multiplicative and in fact satisfies

$$Pi(n) = \sum_{d|n} d\phi(\frac{n}{d}),$$
and 
$$\sum_{d|n} Pi(d) = n\tau(n) = \sum_{d|n} \sigma(d) \phi(\frac{n}{d}),$$ where $\tau(n)$ is the number of divisors of $n$. For related results, see the recent work by Zhang and Zhai \cite{Z and Z}.

The second and, for our purposes, more important connection is that there is a close relationship between the size of $\sigma(n)/n$ and $\phi(n)$.   If one wants  to understand deficient, perfect and abundant numbers, then the function $h(n)= \sigma(n)/n$ is a natural object to study.
Recent work by Dubovski\cite{Dubovski} also looks at the more general function $$f_s(n) = \frac{\sum_{d|n} d^s}{n^s}.$$ Note that $h(n) = f_1(n)$.

One of the most basic properties of $h(n)$ is that $h(ab) \geq h(a) $ with equality if and only if $b=1$. An immediate consequence of this inequality is that if $n$ is perfect or abundant, then $mn$ is abundant for any $m >1$. Motivated by this last observation, a major object of interest is what are  primitive non-deficient numbers, which are non-deficient $n$ with no non-deficient divisors. Note that every perfect number is primitive non-deficient.\footnote{Some authors prefer to study primitive abundant numbers which are abundant numbers with no abundant divisor, but for many purposes primitive non-deficient numbers are a more natural set for our purposes.} 

Consider the closely related function  $$H(n) = \prod_{p|n}\frac{p}{p-1},$$ where the product is over the prime divisors of $n$. The choice of the capital letter for $H(n)$ is due to its close connection to $h(n)$. In particular, $h(n) \leq H(n)$ with equality if and only if $n=1$. Moreover, $H(n)$ is the best possible bound on $H(n)$ that only keeps track of the primes which divide $n$ but not their multiplicity. In particular, for any $n$
$$\lim_{k \rightarrow \infty} h(n^k) = H(n). $$ There is a close connection between  $H(n)$ and  $\phi(n)$. In particular, $H(n) = \frac{n}{\phi(n)}.$ Thus, we can rewrite Equation (\ref{KL(n) def}) as

\begin{equation} \kl (n) = \sum_{d|n} d\log \frac{H(d)}{2} . \label{KL def rewritten in term of H} \end{equation} When $n$ is non-deficient, $H(n) >2$ and  it is natural to look at $S(n) =H(n)-2$, which we will term the {\emph{surplus}} of $n$.  When $n$ is non-deficient, $S(n) > 2 + \frac{1}{n},$ since $H(n)>2$ and the denominator of $H(n)$ is some number which is less than $n$. Our main result uses the behavior of $\kl(n)$ to prove the following. 

\begin{theorem} Assume that $n$ is an odd primitive non-deficient number. If $n$ is a perfect number, 

$$S(n) > \frac{2\left(\log \frac{24 \sqrt{2}}{25}\right)} {\sqrt{n}} + \frac{3}{n}.$$

If $n$ is abundant then 

$$S(n) > \frac{2\left(\log \frac{24 \sqrt{2}}{25}\right)} {\sqrt{n}}.$$
\label{Main new surplus theorem with square root bound}
\end{theorem}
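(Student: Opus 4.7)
The plan is to exploit the non-negativity of the Kullback–Leibler divergence to turn positivity of $\kl(n)$ into a lower bound on $S(n)$. I would define $P(d) = d/\sigma(n)$ and $Q(d) = \phi(d)/n$, both probability distributions on the positive divisors of $n$ (the former by definition of $\sigma(n)$, the latter by the identity $\sum_{d\mid n}\phi(d) = n$ cited in the introduction). Unpacking $D_\kl(P\parallel Q) \geq 0$ and using $d/\phi(d) = H(d)$ should yield $\sum_{d\mid n} d\log H(d) \geq \sigma(n)\log h(n)$, equivalent to $\kl(n) \geq \sigma(n)\log(h(n)/2)$. This quantity is non-negative for $n$ non-deficient and strictly positive for $n$ abundant.

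Second, I would derive an explicit closed form for $\kl(n)$. Writing $\log H(d) = \sum_{p\mid d}\log H(p)$ and exchanging the order of summation gives $\sum_{d\mid n} d\log H(d) = \sum_{p\mid n}\log H(p)[\sigma(n)-\sigma(n/p^{a_p})]$, where $p^{a_p}\|n$. Combined with the multiplicativity $\sigma(n) = \sigma(p^{a_p})\sigma(n/p^{a_p})$, this simplifies to
\[
\kl(n) = \sigma(n)\log(H(n)/2) - \sigma(n)\sum_{p\mid n}\frac{\log H(p)}{\sigma(p^{a_p})}.
\]
Combining with $\kl(n)\geq\sigma(n)\log(h(n)/2)$ and applying $\log(1+x)\leq x$ with $x = S(n)/2$ then yields the chain
\[
\frac{S(n)}{2} \;\geq\; \log(H(n)/2) \;\geq\; \log(H(n)/h(n)) \;\geq\; \sum_{p\mid n}\frac{\log H(p)}{\sigma(p^{a_p})}.
\]

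The main remaining task---and what I expect to be the key obstacle---will be to show the last sum is at least $\log(24\sqrt{2}/25)/\sqrt{n}$ for every odd primitive non-deficient $n$. Since $n$ is odd with $\omega(n)\geq 3$ (as odd integers with at most two distinct prime factors satisfy $H\leq 15/8<2$) and $\prod_{p\mid n}\sigma(p^{a_p}) = \sigma(n) \geq 2n$, the arithmetic–geometric mean inequality gives $\min_p\sigma(p^{a_p})\leq\sigma(n)^{1/\omega(n)}$, with an analogous bound on the product of the two smallest. The specific constant $24\sqrt{2}/25$ appears to arise from the worst-case configuration of $(p,a_p)$; the form $24 = \sigma(15)$ and $25 = 5^2$ suggests the critical case involves divisors built from the small primes $3$ and $5$, and the proof should proceed by a case analysis on which prime achieves the minimum $\sigma(p^{a_p})$, combined with use of the full sum when no single term is individually large enough.

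For the additional $+3/n$ in the perfect case, I would use an elementary integrality argument. Since $S(n) = (n - 2\phi(n))/\phi(n)$ and the numerator is a positive odd integer for odd $n$ non-deficient, it is at least $1$; ruling out equality with $1$ (a Lehmer-type condition requiring $\phi(n)\mid n - 1$) gives $n - 2\phi(n) \geq 3$, hence $S(n) \geq 3/\phi(n) > 3/n$. Combining this with a careful refinement of the KL argument (exploiting $\sigma(n) = 2n$ exactly in the perfect case) should yield the stated additive improvement.
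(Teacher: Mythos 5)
Your reduction is correct as far as it goes: the identity $\kl(n)=\sigma(n)\log\frac{H(n)}{2}-\sigma(n)\sum_{p\mid n}\frac{\log H(p)}{\sigma(p^{a_p})}$ checks out, as does the chain $\frac{S(n)}{2}\geq\log\frac{H(n)}{2}\geq\log\frac{H(n)}{h(n)}\geq\sum_{p\mid n}\frac{\log H(p)}{\sigma(p^{a_p})}$, and this is a genuinely different organization from the paper's proof, which never computes $\kl(n)$ in closed form: it splits $n=ab$ into coprime halves with $H(a)\leq\sqrt{2+S(n)}$ and $H(b)\leq\frac{25}{24}\sqrt{2+S(n)}$, keeps only the divisors $1,3,5,7,a,b$, bounds the remaining terms by $\sigma(n)\log\frac{H(n)}{2}$, and extracts $\sqrt{n}$ from $a+b\geq2\sqrt{ab}=2\sqrt{n}$. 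That is also where the constant really comes from: $\sqrt{2}$ from $H(a)H(b)\approx2$ and $\frac{24}{25}$ from $1-p_{j+1}^{-2}$ with $p_{j+1}\geq5$, not from $\sigma(15)$ and $5^2$. The step you yourself flag as the key obstacle is, however, genuinely not closed. Each term of your sum has size roughly $p^{-(a_p+1)}$, and the pigeonhole bound gives only $\min_p p^{a_p+1}\leq(nR)^{1/k}\leq n^{2/k}$. For $k\geq4$ this is at most $\sqrt{n}$ and your largest single term already suffices, but for $k=3$ you get only $n^{2/3}$, which dwarfs $\sqrt{n}$ for large $n$; in the balanced worst case the entire sum is of order $n^{-2/3}$, so no case analysis on ``which prime achieves the minimum'' can rescue the target inequality without extra input. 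What saves $k=3$ is that an odd non-deficient number with three distinct prime factors must have all its primes in $\{3,5,7,11,13\}$, hence $R\leq195$ and $\min_p p^{a_p+1}\leq(195n)^{1/3}$; this gives the bound for $n$ beyond an explicit threshold, and the finitely many smaller primitive non-deficient $n$ must be checked directly. None of this is in your write-up, so the central inequality $\sum_{p\mid n}\frac{\log H(p)}{\sigma(p^{a_p})}\geq\frac{1}{\sqrt{n}}\log\frac{24\sqrt{2}}{25}$ remains unproved.

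The $+\frac{3}{n}$ in the perfect case is a second, independent gap. Excluding $n-2\phi(n)=1$ for odd composite $n$ is precisely the assertion that $\phi(n)\mid n-1$ cannot happen, i.e.\ Lehmer's totient problem, which is open; you cannot simply ``rule out equality with $1$.'' Moreover, even granting $S(n)\geq\frac{3}{\phi(n)}$, you would then hold two separate lower bounds, $S(n)\geq\frac{2C}{\sqrt{n}}$ and $S(n)\geq\frac{3}{n}$, and these combine to a maximum, not a sum; no refinement turns them into $S(n)\geq\frac{2C}{\sqrt{n}}+\frac{3}{n}$. In the paper the additive term arises inside the single inequality: the divisors $1,3,5,7$ contribute the explicit constant $E_2=\log2+3\log\frac43+5\log\frac85+7\log\frac{12}{7}>3$ to the very sum that produces the $\frac{2C}{\sqrt{n}}$ term, so both pieces arrive together. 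To salvage your route you would need an analogous nonnegative contribution retained inside your own chain (for instance, the terms you discard when passing from $\log\frac{H(n)}{2}$ to $\log\frac{H(n)}{h(n)}$), not a separate integrality argument.
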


Theorem \ref{Main new surplus theorem with square root bound} has the following weaker but nicer corollary:

\begin{corollary} If $n$ is an odd non-deficient number then $S(n) > \frac{3}{5\sqrt{n}}$. \label{Main corollary}
\end{corollary}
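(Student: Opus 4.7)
The plan is to reduce the corollary directly to Theorem \ref{Main new surplus theorem with square root bound} by passing from a general odd non-deficient $n$ to a primitive non-deficient divisor, and then controlling how $H$ and $\sqrt{\cdot}$ behave under divisibility. Since $n$ is non-deficient, the set of non-deficient divisors of $n$ is nonempty (it contains $n$), so it has a minimal element $m$ under divisibility. By construction $m$ is primitive non-deficient, and since $m \mid n$ and $n$ is odd, $m$ is also odd. Theorem \ref{Main new surplus theorem with square root bound} therefore applies to $m$.

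Next I would exploit the multiplicative monotonicity of $H$. Because $H(k) = \prod_{p\mid k}\frac{p}{p-1}$ and each factor exceeds $1$, we have $H(a)\le H(b)$ whenever $a\mid b$ (passing from $m$ to $n$ can only introduce additional prime factors, never remove them). Applied to $m \mid n$ this gives $H(n)\ge H(m)$, hence $S(n)\ge S(m)$. Moreover $m\le n$, so $\sqrt{m}\le\sqrt{n}$ and $1/\sqrt{m}\ge 1/\sqrt{n}$.

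Now I would apply Theorem \ref{Main new surplus theorem with square root bound} in its weaker (abundant) form, which is valid in both the perfect and abundant cases: $S(m) > \frac{2\log(24\sqrt{2}/25)}{\sqrt{m}}$. Stringing the inequalities together,
$$S(n)\;\ge\;S(m)\;>\;\frac{2\log(24\sqrt{2}/25)}{\sqrt{m}}\;\ge\;\frac{2\log(24\sqrt{2}/25)}{\sqrt{n}}.$$
It then suffices to verify the numerical inequality $2\log(24\sqrt{2}/25) > 3/5$. Since $24\sqrt{2}/25 > 1.357$ and $\log(1.357) > 0.305$, we indeed get $2\log(24\sqrt{2}/25) > 0.610 > 0.6 = 3/5$, which yields $S(n) > \frac{3}{5\sqrt{n}}$.

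There is no real obstacle: the only substantive content beyond Theorem \ref{Main new surplus theorem with square root bound} is the reduction step, which is standard, plus the elementary constant check $2\log(24\sqrt{2}/25) > 3/5$. The mild subtlety worth flagging is to be sure that dropping the additive $3/n$ term from the perfect case of the theorem is harmless, which it is since we only need a bound of the form $c/\sqrt{n}$ with $c\ge 3/5$, and this $c$ is supplied by the abundant case alone.
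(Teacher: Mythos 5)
Your proposal is correct and matches the paper's intent: the paper proves the corollary by exactly this reduction to an odd primitive non-deficient divisor (the same reduction is invoked at the start of the proof of Theorem \ref{Main new surplus theorem with square root bound}), followed by the numerical check that $2\log\frac{24\sqrt{2}}{25} > \frac{3}{5}$. Your handling of the two cases of the theorem (using only the weaker abundant-case bound, which the perfect-case bound implies) and the monotonicity $H(m)\le H(n)$ for $m\mid n$ are both sound.
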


Note that Corollary \ref{Main corollary} does have non-trivial content in the sense that one can provide examples of  odd $n$ which are deficient, which have $H(n)>2$, and which have $S(n) < \frac{3}{5\sqrt{n}}$. Examples include $n=165$ and $n=195$. Motivated by this, we will prove a stronger claim.

\begin{theorem} If $n$ is a odd and non-deficient number with $k$ distinct prime factors then $$H(n) \geq 2 + \frac{4}{3}{\frac{1}{n^\frac{2}{k}}}.$$ \label{tighter Surplus lower bound} 
\end{theorem}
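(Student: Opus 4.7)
The plan is to reduce the inequality to the non-deficient hypothesis $h(n) = \sigma(n)/n \geq 2$ via a multiplicative identity relating $h$ and $H$. Write $n = p_1^{a_1}\cdots p_k^{a_k}$, and expand each factor of $h$ as a truncated geometric series and each factor of $H$ as the corresponding infinite geometric series; dividing gives the identity
\[
\frac{h(n)}{H(n)} \;=\; \prod_{i=1}^k \left(1 - \frac{1}{p_i^{a_i+1}}\right),
\]
so that $H(n) - h(n) = H(n)\bigl(1 - \prod_{i=1}^k (1 - p_i^{-(a_i+1)})\bigr)$.

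The first key step is the elementary bound $1 - \prod_i (1-x_i) \geq \max_j x_j$ for $x_i \in [0,1]$, obtained by dropping all but the $j$-th factor of the product (each remaining factor lies in $[0,1]$). This yields $H(n) - h(n) \geq H(n)/\min_i p_i^{a_i+1}$. Since $n$ is non-deficient, $H(n) \geq h(n) \geq 2$, and consequently $H(n) - 2 \geq H(n) - h(n)$, so
\[
H(n) - 2 \;\geq\; \frac{2}{\min_i p_i^{a_i+1}}.
\]

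The second key step is an AM--GM bound on the minimum: $\min_i p_i^{a_i+1} \leq \bigl(\prod_i p_i^{a_i+1}\bigr)^{1/k} = (n \cdot \mathrm{rad}(n))^{1/k} \leq n^{2/k}$, using $\mathrm{rad}(n) = \prod_i p_i \leq \prod_i p_i^{a_i} = n$. Combining these gives $H(n) \geq 2 + 2/n^{2/k}$, which is strictly stronger than the claimed $H(n) \geq 2 + \tfrac{4}{3 n^{2/k}}$.

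I do not foresee a serious obstacle: the main move is recognizing the identity $h(n)/H(n) = \prod_i (1 - p_i^{-(a_i+1)})$, after which both bounds (a single-factor drop, then min-versus-geometric-mean) are one-liners. The oddness of $n$ is not strictly needed for the argument, but it guarantees $p_i \geq 3$ and hence each $p_i^{-(a_i+1)} \leq 1/9$, comfortably inside $[0,1]$ where the factor-drop inequality applies. The slack between the constant $2$ produced here and the paper's $4/3$ suggests the authors may prefer a statement with a slightly weakened constant to keep a uniform presentation consistent with bounds derived from the KL-divergence framework of the earlier sections.
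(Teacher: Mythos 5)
Your proof is correct, and it reaches the stated bound (indeed a stronger one) by a route that differs from the paper's in its first half. The paper proves an intermediate lemma by writing $H(n) > \bigl(\prod_{i\neq j} h(p_i^{a_i})\bigr)\bigl(h(p_j^{a_j}) + p_j^{-(a_j+1)}\bigr)$, where $p_j^{a_j+1}$ is chosen minimal, and then bounds the complementary product $\prod_{i\neq j} h(p_i^{a_i})$ below by $\frac{4}{3}$ using $h(n)\geq 2$ together with $H(p_j^{a_j})\leq \frac{3}{2}$ for odd $p_j$; this is exactly where the oddness hypothesis and the constant $\frac{4}{3}$ enter. You instead use the exact identity $h(n)/H(n)=\prod_i\bigl(1-p_i^{-(a_i+1)}\bigr)$ together with the one-factor-drop inequality $1-\prod_i(1-x_i)\geq \max_j x_j$, which gives $H(n)-2\geq H(n)-h(n)\geq h(n)\max_j x_j\geq 2/\min_i p_i^{a_i+1}$, i.e.\ constant $2$ rather than $\frac{4}{3}$, with no use of oddness at all. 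The second half of your argument, bounding $\min_i p_i^{a_i+1}$ by the geometric mean $(nR)^{1/k}\leq n^{2/k}$, is exactly the paper's. So your version buys a better constant and a statement valid for all non-deficient $n$, subsuming the paper's odd/even case split (constant $\frac{7}{4}$ when $p_j\neq 2$); since the paper's intermediate lemma is not reused elsewhere, the streamlining loses nothing structurally either.
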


Bounds relating the surplus to other functions of a number also exist. See in particular prior work by the first author \cite{Zelinsky reciprocal bound}, which showed a close relationship between the surplus and the sum of the reciprocals of the prime factors of an odd perfect number. The first bounds on the sum of the reciprocals of the prime factors of an odd perfect number were proven by Perisastri \cite{Perisastri} with tighter subsequent bounds appearing in a series of papers by  Cohen, Hagis and Suryanarayana  \cite{Cohen 1978,Cohen 1980,Suryanarayana III,Suryanarayana and Hagis}. 

The surplus also appeared in  another paper \cite{Zelinsky smallest component} in which the first author used the behavior of the surplus to prove that an odd primitive non-deficient number $n$ must have a surplus not much larger than its largest square-free divisor $R$. If $n=p_1^{a_1}p_2^{a_2} \cdots p_k^{a_k}$ with primes $p_1 < p_2 \cdots < p_k$ then we will write $R(n) = p_1p_2\cdots p_k$. Note that $R(n)$ is sometimes referred to as the radical of $n$.  The following proposition was proved.

\begin{theorem} (Zelinsky, \cite{Zelinsky smallest component}) Let $n$ be a primitive non-deficient number. Then there is an $i$, with $1 \leq i \leq k$ such that \label{bound with kR}
\begin{equation}\label{General inequality for primitive non-deficient numbers}p_i^{a_i+1} < 2kR.\end{equation}
\label{General p to (a+1) < r bound}
\end{theorem}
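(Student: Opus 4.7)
The plan is to argue by contradiction: assume $p_i^{a_i+1} \geq 2kR$ for every $i \in \{1, \ldots, k\}$, and derive a contradiction with $n$ being primitive non-deficient.

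First, I would extract a per-prime inequality from primitivity. Since $n/p_i$ is a proper divisor of $n$, it is deficient, so $\sigma(n/p_i) < 2(n/p_i)$, and by integrality $\sigma(n/p_i) \leq 2(n/p_i) - 1$. Combining this with $\sigma(n) \geq 2n$ via the multiplicative identity $\sigma(n)/\sigma(n/p_i) = (p_i^{a_i+1}-1)/(p_i^{a_i}-1)$, a short manipulation yields
$$p_i\,\sigma(p_i^{a_i}) \leq 2n, \quad \text{equivalently,} \quad p_i^{a_i+1} \leq 2n\!\left(1 - \frac{1}{p_i}\right) + 1,$$
valid for every $i$.

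Second, I would combine these with the contradiction hypothesis. Multiplying the assumed lower bounds gives $nR = \prod_i p_i^{a_i+1} \geq (2kR)^k$, hence $n \geq (2k)^k R^{k-1}$; meanwhile the product of the derived upper bounds, $R\,\sigma(n) = \prod_i p_i\,\sigma(p_i^{a_i}) \leq (2n)^k$, paired with $\sigma(n) \geq 2n$ yields $R \leq 2^{k-1} n^{k-1}$. Summing the per-prime bounds instead gives $\sum_i p_i^{a_i+1} \leq 2n(k - \sum_i 1/p_i) + k$, to be compared against $\sum_i p_i^{a_i+1} \geq 2k^2 R$. Together, these estimates squeeze $n$ and $R$ into a narrow joint regime; the contradiction should emerge most cleanly by singling out the smallest prime $p_1$, where the per-prime upper bound $p_1^{a_1+1} \leq 2n(1 - 1/p_1) + 1$ is tightest (and, when $p_1 = 2$, reduces to $p_1^{a_1+1} \leq n+1$).

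The main obstacle is closing the arithmetic gap: a direct averaging of the per-prime inequality yields only $\min_i p_i^{a_i+1} \leq 2n$, which falls short of $2kR$ whenever $n > kR$. Bridging this gap is the heart of the argument and likely requires exploiting the multiplicative structure of $R = \prod_i p_i$ together with the distinctness of the primes more delicately than a raw sum or product provides; I expect the proof to close via either a case split on the smallest prime $p_1$ or a refined substitution that uses $\sigma(n) \geq 2n$ through the identity $h(n) = H(n)\prod_i(1 - 1/p_i^{a_i+1})$ in tandem with the sum bound above.
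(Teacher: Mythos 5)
This theorem is not proved in the present paper; it is quoted from the first author's earlier work \cite{Zelinsky smallest component}, so there is no in-paper proof to compare against. Judged on its own terms, your proposal has a genuine gap, and you have in fact flagged it yourself. Your first step is correct and worth keeping: from primitivity, $\sigma(n/p_i) \leq 2n/p_i - 1$ together with $\sigma(n) \geq 2n$ and $\sigma(n)/\sigma(n/p_i) = (p_i^{a_i+1}-1)/(p_i^{a_i}-1)$ does yield $p_i\sigma(p_i^{a_i}) \leq 2n$, i.e.\ $p_i^{a_i+1} \leq 2n(1-1/p_i)+1$, for every $i$. But this family of inequalities bounds each component only in terms of $n$, and the quantity $R$ appears nowhere in your upper bounds --- it enters only through the contradiction hypothesis $p_i^{a_i+1} \geq 2kR$. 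Consequently the product and sum manipulations you describe cannot produce a contradiction: $n \geq (2k)^k R^{k-1}$ and $R \leq 2^{k-1}n^{k-1}$ are simultaneously satisfiable for $n$ large relative to $R$, and your own averaging observation ($\min_i p_i^{a_i+1} < 2n$) shows the per-prime bound is simply too weak whenever $n \gg kR$. The sentence ``I expect the proof to close via\dots'' is where the theorem actually has to be proved, so as written this is a plan, not a proof.

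The missing idea is the mechanism that forces $R$ into a \emph{lower} bound, and it is an integrality argument applied to $H$ rather than to $\sigma$. Since $H(n) = R/\phi(R)$ is a ratio of integers and $H(n) > h(n) \geq 2$, one gets the quantitative gap $H(n) \geq 2 + 1/\phi(R)$. Feeding this into the identity you mention only in passing, $h(m) = H(m)\prod_i\bigl(1 - p_i^{-(a_i+1)}\bigr)$, applied to the deficient proper divisors $n/p_j$ (for which $h(n/p_j) < 2$), and using the hypothesized lower bounds $p_i^{a_i+1} \geq 2kR$ to control the product over $i \neq j$, one extracts a bound of the shape $p_j^{a_j} \lesssim 2R$, hence $p_j^{a_j+1} \lesssim 2p_jR$. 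Taking $j$ to index the smallest prime and invoking Lemma \ref{Servais's Lemma} ($k \geq p_1$) converts $2p_1R$ into $2kR$ and closes the contradiction. Your proposal contains neither the step $H(n) \geq 2 + 1/\phi(R)$ (nor any substitute for it) nor the appeal to Servais's lemma, and both appear to be essential: without the former no bound involving $R$ alone can be reached, and without the latter the factor $k$ cannot replace the prime $p_1$.
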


The first author also conjectured that the $2kR$ on the right-hand side of Equation (\ref{General inequality for primitive non-deficient numbers}) could be replaced with just $R$.

For the remainder of this paper, we will assume that $n=p_1^{a_1}p_2^{a_2} \cdots p_k^{a_k}$ with primes $p_1 < p_2 \cdots < p_k$ and let $R=R(n) = p_1p_2\cdots p_k$. 

Recent work on odd perfect numbers by a variety of authors have been to prove theorems of the form   ``If $n$ an odd perfect number then $R(n) \leq  Cn^\alpha$.''  A major goal of these papers is to show that $R(n) < \sqrt{n}$. For example, Pomerance and Luca \cite{Pomerance and Luca} showed that one must have $R < 2n^{\frac{17}{26}}$. Subsequently Klurman \cite{Klurman} showed that there is a constant $C$ such that if $n$ is an odd perfect number then $R < Cn^{\frac{9}{14}}$. It is not hard to show that one may take $C=4$ in Klurman's argument. Others \cite{Ellia, OchemRaoRadical}) have proven a variety of strong restrictions on how such an $n$ would behave if $R > \sqrt{n}$. 

The denominator of $S(n)$ is $(p_1-1)(p_2-1) \cdots (p_k-1)$, and thus $S(n) >  \frac{1}{R(n)}$. Iff$R(n)  < \sqrt{n}$ (whether $n$ is perfect or abundant) Theorem \ref{Main new surplus theorem with square root bound} would become completely trivial. However, there are many examples of primitive, non-deficient $n$ where $R(n) > \sqrt{n}$, and often $R$ can be  much larger. For example, $n=945$ has $R=105$. %Thus, even if the goal of showing that an odd perfect number $n$ must satisfy $R(n) < \sqrt{n}$, the main results here will still have content. 

A brief digression: Since $h(n)$ can be rewritten as $$h(n) = \sum_{d|n}\frac{1}{d},$$
there is some connection between the study of $h(n)$ and the study of Egyptian fractions. Given a rational number $x$, an Egyptian fraction representation of $x$ is list of distinct natural numbers $a_1, a_2, \cdots a_s$ such that $$x= \frac{1}{a_1} \cdots + \frac{1}{a_s}.$$ Thus, a sum giving rise to a specific value of $h(n)$ can be thought of as a very restricted type of Egyptian fraction. In that context, it is worth noting that the Kullback-Leibler divergence is closely related to notions of entropy, and can be thought of as a generalization of entropy. The use of entropy to understand Egyptian fractions has occurred in other contexts \cite{CFHMPSV} although the use there is in  ore sophisticated than that in this paper. 

There are also other recent examples in the literature of using ideas related to entropy and information to obtain number theoretic results. Kontoyiannis \cite{Kontoyiannis1, Kontoyiannis2} used information-theoretic ideas to give a proof of Chebyshev's classical result that if $f(x)=\sum_{p \leq x}  \frac{\log p}{p}$ where the sum is over primes at most $x$, then $f(x)$ is asymptotic to $\log x$.

\section{Preliminaries}

A key ingredient in proving Theorem \ref{Main new surplus theorem with square root bound} will be the following.

\begin{lemma} Let $n$ be a non-deficient number. Then $KL(n) >0$. \label{KL is positive for any non-deficient number}
\end{lemma}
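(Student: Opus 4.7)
The plan is to reinterpret $\kl(n)$ as a (renormalized) Kullback--Leibler divergence. As the introduction notes, the sequence $d/(2n)$ is a probability distribution on divisors of $n$ only when $n$ is perfect; for abundant $n$ the total $\sigma(n)/(2n)$ strictly exceeds $1$, so one cannot directly cite non-negativity of KL divergence. I would finesse this by invoking the \emph{log-sum inequality}: for any positive reals $a_d, b_d$ indexed by the divisors $d \mid n$,
$$\sum_{d|n} a_d \log \frac{a_d}{b_d} \;\geq\; \Bigl(\sum_{d|n} a_d\Bigr) \log \frac{\sum_{d|n} a_d}{\sum_{d|n} b_d},$$
with equality if and only if the ratio $a_d/b_d$ is constant across $d \mid n$. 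This is exactly the tool that accommodates the ``almost-but-not-quite'' probability distributions appearing here.

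I would apply the inequality with $a_d = d$ and $b_d = 2\phi(d)$. Using the classical identity $\sum_{d|n}\phi(d) = n$ from the introduction together with $\sum_{d|n} d = \sigma(n)$, and recognizing $d/(2\phi(d)) = H(d)/2$, this yields directly
$$\kl(n) = \sum_{d|n} d \log \frac{H(d)}{2} \;\geq\; \sigma(n) \log \frac{\sigma(n)}{2n}.$$

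Because $n$ is non-deficient, $\sigma(n)/(2n) \geq 1$, so the right-hand side is non-negative and $\kl(n) \geq 0$. To upgrade to strict positivity I would split into the two cases of the definition. If $n$ is abundant then $\sigma(n)/(2n) > 1$, and the lower bound is already strictly positive. If $n$ is perfect, the lower bound collapses to $0$, so I would invoke the equality clause of the log-sum inequality: equality would force $H(d)/2$ to be constant as $d$ ranges over divisors of $n$, but $H(1) = 1$ while $H(n) \geq 2$ for any non-deficient $n$, so $H$ is not constant, the log-sum inequality is strict, and $\kl(n) > 0$ in the perfect case as well.

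The main (and really only) obstacle is the mismatch between the motivating probability-distribution picture, which works cleanly only for perfect numbers, and the broader scope of the lemma over all non-deficient $n$; the log-sum inequality is precisely the device that bridges this gap, and the strictness analysis for perfect $n$ reduces to the trivial observation that $H$ is never constant on the divisors of an integer $n > 1$.
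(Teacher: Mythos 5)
Your proof is correct, but it takes a genuinely different route from the paper's. The paper handles the abundant case by constructing an auxiliary probability distribution: it rescales the weights $\frac{d}{2n}$ on those divisors with $H(d)>2$ by a factor $\alpha\in(0,1)$ chosen so that the total mass is $1$, applies non-negativity of the Kullback--Leibler divergence against the distribution $\frac{\phi(d)}{n}$, and then argues term by term that undoing the rescaling only increases the sum. Your argument replaces all of this with a single application of the log-sum inequality to the unnormalized weights $a_d=d$, $b_d=2\phi(d)$, using $\sum_{d|n}d=\sigma(n)$ and $\sum_{d|n}2\phi(d)=2n$ to obtain
\begin{equation*}
\kl(n)\;\geq\;\sigma(n)\log\frac{\sigma(n)}{2n},
\end{equation*}
which is nonnegative precisely when $n$ is non-deficient; strictness in the abundant case is immediate, and in the perfect case it follows from the equality clause, since $\frac{d}{2\phi(d)}$ equals $\frac{1}{2}$ at $d=1$ but is at least $1$ at $d=n$. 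This is cleaner and in one respect stronger: it yields an explicit quantitative lower bound for abundant $n$, and it sidesteps the verifications that the paper's construction implicitly requires (that $\alpha$ actually lies in $(0,1)$, and the termwise monotonicity $d\log\frac{d}{2\phi(d)}>\alpha d\log\frac{\alpha d}{2\phi(d)}$ when $H(d)>2$). Both arguments are ultimately instances of the convexity of $t\log t$, so the underlying mechanism is the same, but your packaging is more direct and handles the perfect and abundant cases uniformly.
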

\begin{proof} Our earlier remarks show this is true when $n$ is perfect. So we may assume that $n$ is abundant. We construct two distributions on $n$. The first distribution is again $P(d)= \frac{\phi(d)}{n}$. The second distribution is defined as follows. We choose a constant $\alpha$ such that

$$\sum_{d|n, H(d) \leq 2} d + \sum_{d|n, H(d) >2 } \alpha d=2n.$$ Then $Q(d)$ is a distribution where $Q(d) = \frac{d}{2n}$ when $H(d) \leq 2$, and $Q(d)= \frac{\alpha d}{2n}$ when $H(d) > 2$. 

We have that the Kullback-Liebler divergence of these two distributions is non-negative and so 

\begin{equation}\begin{split} \kl (n) & = 2n\left(\sum_{d|n} d\log \frac{d}{2\phi(d)}\right)\\ & > 2n\left(\sum_{\substack{d|n\\ H(d) \leq 2}} \frac{d}{2n}\log \frac{d}{2\phi(d)} + \sum_{\substack{d|n\\ H(d) >2}} \alpha \frac{d}{2n}\log \frac{\alpha d}{2\phi(d)}\right) \geq 0.
\end{split}
\end{equation} 
 In the above chain of inequalities, the leftmost inequality comes from the fact that $0<\alpha <1$ and that if $H(d) >0$, then $ d\log \frac{d}{2\phi(d)} > \alpha  d\log \frac{\alpha d}{2\phi(d)} .$
\end{proof}

To some extent a reader hoping that the Kullback-Liebler divergence would give new bounds about perfect numbers may be disappointed in Lemma \ref{KL is positive for any non-deficient number}, since this shows that our basic inequality for $\kl (n)$ applies to any non-deficient number. Later, we will discuss aspects which may help partially alleviate such a reader's disappointment. 

% Add back in if try to do the last bit of optimization.
%We will also include here the following Lemma which is a %straightforward calculus exercise which we will need later.

%\begin{lemma} If $A>0$ and $B>0$, then the function %$f(x)=\frac{A}{x} + Bx$ restricted to the range of $x>0$ satisfies $f(x) \geq 2\sqrt{AB}$. \label{annoying calculus lemma}
%\end{lemma}

We also will need the following technical results which we note here.

\begin{lemma} Let $f(x)=x\log \frac{x}{2(x-1)}$. Then $f(x)$ is a decreasing function for $x>1$. \label{x log function is strictly decreasing}
\end{lemma}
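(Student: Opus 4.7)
The plan is to reduce monotonicity to a sign analysis of $f'(x)$. Writing $f(x) = x\log x - x\log 2 - x\log(x-1)$, a direct differentiation gives
\[
f'(x) \;=\; \log\frac{x}{2(x-1)} \;+\; 1 \;-\; \frac{x}{x-1} \;=\; \log\frac{x}{2(x-1)} \;-\; \frac{1}{x-1}.
\]
So the claim reduces to proving the inequality $\log\frac{x}{2(x-1)} < \frac{1}{x-1}$ for every $x > 1$.

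To prove this inequality, I would set
\[
g(x) \;=\; \frac{1}{x-1} \;-\; \log\frac{x}{2(x-1)},
\]
and show that $g(x) > 0$ for all $x > 1$. A second differentiation collapses nicely: using $\frac{d}{dx}\log\frac{x}{2(x-1)} = \frac{1}{x} - \frac{1}{x-1} = -\frac{1}{x(x-1)}$, one obtains
\[
g'(x) \;=\; -\frac{1}{(x-1)^2} \;+\; \frac{1}{x(x-1)} \;=\; -\frac{1}{x(x-1)^2} \;<\; 0 .
\]
Thus $g$ is strictly decreasing on $(1,\infty)$.

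It remains to identify the infimum of $g$. Since $\frac{x}{2(x-1)} \to \tfrac{1}{2}$ as $x \to \infty$, we have
\[
\lim_{x\to\infty} g(x) \;=\; 0 - \log\tfrac{1}{2} \;=\; \log 2 \;>\; 0 .
\]
Because $g$ is strictly decreasing and converges to $\log 2$ at infinity, $g(x) > \log 2 > 0$ for every $x > 1$. Therefore $f'(x) = -g(x) < 0$ throughout $(1,\infty)$, and $f$ is strictly decreasing there.

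I do not anticipate any real obstacle: the only point requiring a little care is making sure the derivative is manipulated correctly (the $\log$ term and the $\frac{x}{x-1}$ term must combine cleanly), and that the limit at infinity is computed before concluding positivity. Once $g'(x) = -1/(x(x-1)^2)$ is in hand, the rest is immediate.
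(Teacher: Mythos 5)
Your proof is correct and follows the same route as the paper: you compute the same derivative, $f'(x) = \log\frac{x}{2(x-1)} - \frac{1}{x-1}$, which matches the paper's $\frac{1}{1-x}+\log\frac{x}{2(x-1)}$, and then show it is negative. The only difference is that the paper dismisses the negativity of $f'$ as ``easy to see,'' whereas you supply the justification explicitly (via $g'(x) = -\frac{1}{x(x-1)^2}<0$ and the limit $g(x)\to\log 2$), which is a welcome addition rather than a deviation.
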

\begin{proof} Note that $f'(x) = \frac{1}{1-x} + \log \frac{x}{2(x-1)}$, and it is easy to see that $f'(x) <0$ for $x>0$.
\end{proof}

Finally we need the following lemma.

\begin{lemma} Let $n$ be a non-deficient number, with smallest prime factor $p$, and with $k$ distinct prime factors. Then $k \geq p$. \label{Servais's Lemma}
\end{lemma}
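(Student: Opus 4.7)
The plan is to bound $H(n)$ from above in terms of $k$ and $p$ and compare this with the lower bound $H(n) > 2$ that comes from non-deficiency. The key identity to exploit is the telescoping product, and the key inequality is the monotonicity of $x/(x-1)$.

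First I would record that $H(n) > 2$ whenever $n$ is non-deficient. This follows because $h(n) \leq H(n)$ with equality only at $n=1$, combined with the hypothesis $h(n) \geq 2$ (and noting $n > 1$ since $n$ is non-deficient). So we have the strict bound $H(n) > 2$.

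Next I would bound $H(n)$ from above in terms of $p$ and $k$. Write the prime factorization of $n$ as $p_1 < p_2 < \cdots < p_k$ with $p_1 = p$. Since the $p_i$ are distinct integers with $p_i \geq p$, we get $p_i \geq p+i-1$. The function $x/(x-1)$ is decreasing for $x > 1$, so
\begin{equation}
H(n) = \prod_{i=1}^{k}\frac{p_i}{p_i-1} \leq \prod_{i=1}^{k} \frac{p+i-1}{p+i-2} = \frac{p+k-1}{p-1},
\end{equation}
where the last equality is a telescoping product.

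Combining the two inequalities gives $\frac{p+k-1}{p-1} > 2$, which rearranges to $k > p - 1$, so $k \geq p$. The only subtle point is making sure the inequality $H(n) > 2$ is strict and not just $\geq$; this is what rules out $k = p - 1$ and delivers the clean bound $k \geq p$. No case analysis or technical estimates beyond the monotonicity of $x/(x-1)$ should be required.
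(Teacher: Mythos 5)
Your proof is correct and complete. Note that the paper does not actually prove this lemma in-text; it only cites an external reference (\cite{Zelinsky big}) for a proof, so there is no internal argument to compare against. What you have written is the classical Servais-style argument: the strict bound $H(n) > 2$ from non-deficiency (using $h(n) < H(n)$ for $n>1$), the pointwise bound $p_i \geq p+i-1$ from distinctness of the primes, the monotonicity of $x/(x-1)$, and the telescoping product yielding $H(n) \leq \frac{p+k-1}{p-1}$, whence $k > p-1$ and so $k \geq p$. Every step checks out, and the remark about strictness of $H(n) > 2$ being what rules out $k = p-1$ is exactly the right point to flag. This is a clean, self-contained replacement for the citation.
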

Versions of Lemma \ref{Servais's Lemma} are old, dating back to Servais in the late 19th century (although Servais only stated the Lemma for odd perfect numbers). A proof of this Lemma can be found in Section 5 of a prior paper by the second author \cite{Zelinsky big} where substantially tighter bounds are proved.  Recent work \cite{AslakKirf} by Aslaksen, and Kirfel further strengthens these results, and a similar strengthening is in a recent paper cite{Stone} by Stone.

\section{Proving the main results}

\begin{proof}[Proof of Theorem \ref{Main new surplus theorem with square root bound}] We note that the theorem for all odd non-deficient numbers follows if we can prove it for odd primitive non-deficient numbers. Thus, we may  assume that $n$ is an odd primitive non-deficient number.  Since $n$ is odd and non-deficient, $n$ must have at least three distinct prime divisors. Set $n= p_1^{a_1} \cdots p_k^{a_k}$ where $p_1 < p_2 \cdots p_k$. Let $j$ be the largest positive integer such that
$$H(p_1^{a_1} \cdots p_j^{a_j}) \leq H(p_{j+1}^{a_{j+1}} \cdots p_k^{a_k}).$$

We will write $a=p_1^{a_1} \cdots p_j^{a_j}$ and $b=p_{j+1}^{a_{j+1}} \cdots p_k^{a_k}$. We have \begin{equation}
    H(a)^2 \leq H(a)H(b) \leq 2+S(n),
\end{equation}

and so

\begin{equation}
    H(a) \leq \sqrt{2+S(n)} \label{upper bound for H(a)}.
\end{equation}

From the definition of $j$, we have

\begin{equation} H(a) \frac{p_{j+1}}{p_{j+1}-1} \geq H(b) \frac{p_{j+1}-1}{p_{j+1}} \label{First H(a) bound on H(b)},
\end{equation}

which is equivalent to 

\begin{equation} H(a) \geq \frac{p_{j+1}^2 -1}{p_{j+1}^2}H(b) \label{Second H(a) bound on H(b)}.
\end{equation}

We have that $p_{j+1} \geq 5$ and thus
\begin{equation} H(b) \leq \frac{25}{24}\sqrt{2+S(n)}. \label{upper bound for H(b)}
\end{equation}

Note that $a$ and $b$ are both divisors of $n$ we have

\begin{equation} a \log \frac{H(a)}{2} + b \log \frac{H(b)}{2} + \log\frac{1}{2} + E_1 +\sigma(n)\log \frac{H(n)}{2} > \kl(n) >0 . \label{application of kl to non-deficient with a and b terms}
 \end{equation}

In Equation (\ref{application of kl to non-deficient with a and b terms}) $E_1 = \log \frac{1}{2} + 3\log \frac{3}{4} + 5\log \frac{5}{8} + 7 \log \frac{7}{12}$. We are using that $1|n$, and that $n$ has at least three distinct prime divisors, along with Lemma \ref{x log function is strictly decreasing} to get that the primes being $3$, $5$ and $7$ is the weakest scenario.   The $\sigma(n)\log \frac{H(n)}{2}$ term in Equation (\ref{application of kl to non-deficient with a and b terms}) is coming from overestimating $\kl(n)$ and assuming that every divisor $d$ has $H(d) =H(n)$.

We now break this down into two cases, when $n$ is perfect and when $n$ is abundant. If $n$ is perfect, then we have $\sigma(n)=2n$, and thus we obtain from Equation (\ref{application of kl to non-deficient with a and b terms}) that 

\begin{equation}  \log \frac{H(n)}{2} > \frac{1}{2n}\left(a \log \frac{2}{H(a)} + b \log \frac{2}{H(b)} +E_2 \right). \label{Right before x > log (1+x) step}  
\end{equation}
Here $E_2= -E_1 =  \log 2 + 3\log \frac{4}{3} + 5\log \frac{8}{5} + 7 \log \frac{12}{7}$. Since $S(n) = H(n)-2$, we have that  $\log\frac{H(n)}{2} = \log \left(1 + \frac{1}{2}S\right), $ which combines with the fact that $x \geq \log(1+x)$ to obtain from Equation (\ref{Right before x > log (1+x) step}) that

\begin{equation}
    S(n) \geq \frac{1}{n}\left(a \log \frac{2}{H(a)} + b \log \frac{2}{H(b)} +E_2 \right).  \label{lower bound for S first version}
\end{equation}
We now apply Equation (\ref{upper bound for H(a)}) and Equation (\ref{upper bound for H(b)}) to Equation (\ref{lower bound for S first version}) to get that
\begin{align} 
S(n) & \geq \frac{1}{n}\left(a \log \frac{2}{\sqrt{2+S(n)}} + b\log \frac{48}{25 \sqrt{2+S(n) }} +E_2\right)   \\ 
 &> \frac{a+b}{n} \log \frac{48}{25 \sqrt{2+S(n) }} + \frac{E_2}{n}.\label{lower bound for S second version }
\end{align}
Since $ab=n$, $a+b \geq 2\sqrt{n}$, and thus
\begin{equation}
    S(n) >  \frac{2}{\sqrt{n}} \left(\log \frac{48}{25\sqrt{2}}  - \frac{1}{2}\log ( 1+\frac{1}{2}S(n)) \right) +\frac{E_2}{n},
\end{equation}
which implies that
$$S(n) > \frac{2\left(\log \frac{24 \sqrt{2}}{25}\right)}{\sqrt{n}} + \frac{3}{n} $$
which is the desired inequality.

The case for when $n$ is abundant is nearly identical. The only change is that we do not have $\sigma(n) =2n$. We instead use that $\sigma(n) < n(2+S(n))$. We proceed essentially the same as before to obtain that

\begin{equation}
S(n) > \frac{4\left(\log \frac{24 \sqrt{2}}{25}\right)}{(2+S(n))\sqrt{n}} + \frac{E_2}{(2+S(n))n},
\end{equation}

which implies the desired result.  

% If try to squeeze that last bit out go back to this. 
%We now note that $ab=n$ and so apply Lemma \ref{annoying calculus lemma} to the right-hand side of Equation \ref{lower bound for S second version } with $b=x$, $A=n\log \frac{2}{\sqrt{2+S(n)}} $ and $B= \log \frac{48}{25 \sqrt{2+S(n) }},$
%to obtain that
%\begin{equation}
 %   S(n) \geq \frac{1}{n} \sqrt{n} \sqrt{\left(\log \frac{2}{\sqrt{2+S(n)}}\right) \left(\log \frac{48}{25 \sqrt{2+S(n) }}\right)}
%\end{equation}
\end{proof}

We now turn to proving Theorem  \ref{tighter Surplus lower bound}. We first need an additional lemma. 

\begin{lemma} Assume that $n$ is a non-deficient number, and assume that $n=p_1^{a_1}p_2^{a_2} \cdots p_k^{a_k}$ where $p_1 \cdots p_k$ are distinct primes. Let $j$ be the number where $1 \leq j \leq k$, and where for any $i$ with $1 \leq i \leq k$, $p_j^{a_j+1} \leq p_i^{a_i+1}$. Then we have two inequalities depending on whether $n$ is even or odd. If $n$ is odd, then $$H(n) \geq 2 + \frac{4}{3}\frac{1}{p_j^{a_j+1}}. $$

If $n$ is even, then either $p_j=2$ or $$H(n) \geq 2 + \frac{7}{4}\frac{1}{p_j^{a_j+1}}. $$     \label{p sub lemma}
\end{lemma}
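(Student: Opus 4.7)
The plan is to leverage the classical multiplicative identity
\begin{equation*}
h(n) \;=\; H(n) \prod_{i=1}^{k}\!\left(1 - \frac{1}{p_i^{a_i+1}}\right),
\end{equation*}
which follows directly from the Euler product expansions $\sigma(n)/n = \prod_i (1 - p_i^{-(a_i+1)})/(1 - p_i^{-1})$ and $H(n) = \prod_i (1 - p_i^{-1})^{-1}$. Since $n$ is non-deficient, $h(n) \geq 2$, so rearranging gives
\begin{equation*}
H(n) \;\geq\; \frac{2}{\prod_{i=1}^{k} \left(1 - p_i^{-(a_i+1)}\right)}.
\end{equation*}

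Next I would observe that each factor $(1 - p_i^{-(a_i+1)})$ lies in $(0,1)$, so the product of all $k$ such factors is bounded above by the smallest factor alone. Since $p_j^{a_j+1}$ is minimal among the $p_i^{a_i+1}$ by hypothesis, $p_j^{-(a_j+1)}$ is maximal, and hence $1 - p_j^{-(a_j+1)}$ is the smallest factor in the product. This gives
\begin{equation*}
\prod_{i=1}^{k}\left(1 - p_i^{-(a_i+1)}\right) \;\leq\; 1 - p_j^{-(a_j+1)},
\end{equation*}
and therefore the uniform estimate
\begin{equation*}
H(n) \;\geq\; \frac{2\,p_j^{a_j+1}}{p_j^{a_j+1} - 1} \;=\; 2 + \frac{2}{p_j^{a_j+1} - 1}.
\end{equation*}

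To obtain the stated constants I would then do one-line algebra. In the odd case, $p_j \geq 3$ and $a_j \geq 1$ force $p_j^{a_j+1} \geq 9$, and the inequality $\tfrac{2}{p_j^{a_j+1} - 1} \geq \tfrac{4}{3\,p_j^{a_j+1}}$ reduces to $2\,p_j^{a_j+1} + 4 \geq 0$, which is trivial. In the even case with $p_j \neq 2$, the same bound $p_j^{a_j+1} \geq 9$ applies, and the inequality $\tfrac{2}{p_j^{a_j+1} - 1} \geq \tfrac{7}{4\,p_j^{a_j+1}}$ reduces to $p_j^{a_j+1} + 7 \geq 0$, again trivial. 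I do not expect a genuine obstacle: the whole proof is a short manipulation of the Euler products, and the single real idea is that a product of numbers in $(0,1)$ is dominated by its smallest factor, which is precisely why the minimizing index $j$ controls the bound. I would also remark that the $\tfrac{7}{4}$ estimate actually holds even when $p_j = 2$ (since $p_j^{a_j+1} \geq 4$ still makes $\tfrac{2}{p_j^{a_j+1}-1} \geq \tfrac{7}{4\,p_j^{a_j+1}}$), so the ``$p_j = 2$'' disjunction in the statement is a convenience for subsequent case-splitting rather than a reflection of the bound actually failing.
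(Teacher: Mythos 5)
Your argument is correct, and it takes a genuinely different route from the paper's. The paper expands each $H(p^a)$ as a geometric series, keeps only the single extra term beyond $h(p_j^{a_j})$ at the distinguished index, and arrives at $H(n) > h(n) + \frac{1}{p_j^{a_j+1}}\prod_{i \neq j} h(p_i^{a_i})$; the parity hypotheses then enter solely to bound the auxiliary product $\prod_{i\neq j} h(p_i^{a_i})$ from below, by $\frac{4}{3}$ in the odd case (via $h(p_j^{a_j}) < H(p_j^{a_j}) \leq \frac{3}{2}$) and by $h(4)=\frac{7}{4}$ in the even case with $p_j \neq 2$. You instead use the exact identity $h(n) = H(n)\prod_i\left(1 - p_i^{-(a_i+1)}\right)$ together with the observation that a product of numbers in $(0,1)$ is at most any single factor, which gives the uniform bound $H(n) \geq 2 + \frac{2}{p_j^{a_j+1}-1}$ with no case analysis at all. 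Since $2 > \frac{7}{4} > \frac{4}{3}$, this is strictly stronger than both conclusions of the lemma, and your closing remark is right: the $\frac{7}{4}$ inequality survives even when $p_j = 2$, so the disjunction in the statement is a packaging convenience rather than a genuine obstruction. The one thing to note is that your bound is clean enough that, if propagated into the proof of Theorem \ref{tighter Surplus lower bound}, it would improve the constant $\frac{4}{3}$ there to $2$; the paper's local geometric-series expansion does not appear to buy anything that your global identity does not.
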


\begin{proof} We first note that for any prime $p$ and any positive integer $a$, we have that

$$H(p^a) = 1 + \frac{1}{p} + \frac{1}{p^2} + \frac{1}{p^3} \cdots = h(p^a) + \frac{1}{p^{a+1}} + \frac{1}{p^{a+2}} \cdots, $$  

and thus $H(p^a) > h(p^a) + \frac{1}{p^{a+1}}$. Thus, we have

\begin{equation}
    H(n) > \left(\prod_{\substack{1 \leq i \leq k \\ i \neq j}} h(p_i^{a_i})\right) \left(h(p_j^{a_j}) + \frac{1}{p_j^{a_j+1}} \right) = h(n) + \frac{1}{p_j^{a_j+1}}\prod_{\substack{1 \leq i \leq k \\ i \neq j}}{h(p_i^{a_i})}.\label{central inequality}
\end{equation}

We now split into two cases, when $n$ is even and when $n$ is odd. First consider the case when $n$ is odd. We note that for any odd prime $p$, $H(p^a) = \frac{p}{p-1} \leq \frac{3}{2}$. Thus, we have 
that $$\frac{3}{2} \prod_{\substack{1 \leq i \leq k \\ i \neq j}}{h(p_i^{a_i})} \geq 2,$$ (with possible equality only if $p_j=3$). This implies that 

\begin{equation} \label{lower bound for big product in case n is odd} \prod_{\substack{1 \leq i \leq k \\ i \neq j}}{h(p_i^{a_i})} \geq \frac{4}{3}.
\end{equation} Combining Inequality \ref{central inequality} with Inequality \ref{lower bound for big product in case n is odd} gives our desired inequality.

We now consider the case where $n$ is even, and assume that $p_j \neq 2$. Then $$\prod_{\substack{1 \leq i \leq k \\ i \neq j}}{h(p_i^{a_i})} \geq h(4) = \frac{7}{4}.$$
We can use $h(4)$ rather than $h(2)$ in the above since $2^2|n$ and otherwise we would have $p_j=2$. 
\end{proof}

\begin{proof}[Proof of Theorem \ref{tighter Surplus lower bound}] Assume $n$ is odd and non-deficient with $n=p_1^{a_1} \cdots p_k^{a_k}$. Let $p_j$ be defined as before, and set $R= \mathrm{rad}(n)$. Note that $$(p_j^{a_j+1})^k < p_1^{a_1+1}p_2^{a_2 +1} \cdots p_k^{a_k+1} = nR.$$ Thus, $$p_j^{a_j+1} \leq (nR)^\frac{1}{k} \leq (n^2)^\frac{1}{k},$$
from which the result follows when we combine with the inequality from Lemma \ref{p sub lemma}.
\end{proof}

\section{Related results and open questions}

Given our main theorems, the following question seems very natural. 

\begin{question} What is the slowest growing class of functions $f(x)$ such that 
\begin{enumerate}
    \item $f(x)$ is a positive increasing function,
    \item $S(n) > f(x)^{-1}$?
\end{enumerate}
\end{question}

One direction to go in is to look at the function
$$g(n) = \frac{S(n)}{\frac{1}{n^{1/2}}} = S(n)n^{1/2}.$$ 

Let $B_1$ be the set of odd non-deficient record setters for $g(n)$. That is, $n \in B_1$ if and only if for all $k< n$, $g(k) < g(n)$. The first few numbers in $B_1$ are
$$945, 1575, 2205, 2835, 3465, 5775, 8085, 10395, 12705, 15015, 19635, 21945, 25935, 26565  $$ $$31395, 33495, 35805, 42315, 42735, 45045, 58905, 65835, 75075, 98175,  105105, 135135 \cdots $$ % $$   165165, 195195, 225225, 255255, 285285,  345345,  373065,  435435,  465465, 555555, 615615 \cdots$$ 

Table \ref{table:Elements of B_1} in Appendix A shows the first few elements of $B_1$ along with their corresponding values for $g(n)$ and $h(n)$.  This sequence is not currently in the OEIS. Note that $4095$ is the first odd non-deficient number not in $B_1$. 

A related set is $B_2$ defined by $n \in B_2$ if and only if
\begin{enumerate}
    \item $n$ is odd and primitive non-deficient.
    \item For all $k<n$ where $k$ is odd, primitive non-deficient, we have $g(k) < g(n)$.
\end{enumerate}

The first few elements of $B_2$ are:
$$945,1575,2205, 3465, 5775, 8085, 12705, 15015, 19635, 21945, 25935,  26565, 31395,  \cdots $$
$$33495, 35805,  42315, 42735, 47355, 49665, 54285, 61215, 68145, 70455, 77385, \cdots $$ % 77385, 82005, 84315 ,91245 \cdots$$ 
%$$95865, 102795, 112035, 116655, 118965, 123585, 125895, 130515, 146685, 151305, 158235.$$
Note that $47355$ is the first element of $B_2$ which is not an element of of $B_1$, and it seems likely that all subsequent elements of $B_2$ are not elements of $B_1$. Table \ref{table:Elements of B_2} in Appendix A lists the first few elements of $B_2$, and the corresponding $g(n)$ and $h(n)$ values.

Theorem \ref{tighter Surplus lower bound} implies that $B_2$ is an infinite set. It appears that all elements of $B_2$ are divisible by $15$, but we suspect that this will break down for very large values. Let $b_{1,j}$ denote the $j$th element of $B_1$ and let $b_{2,j}$ denote the the $j$th element of $B_2$. There are six related questions where we do not know the answers.

\begin{enumerate}
    \item For any $\epsilon>0$, is there a $t$ such that $g(b_{1,j+1}) < g(b_{1,j})$?
     \item For any $\epsilon>0$, is there a $j$ such that $g(b_{2,j+1}) < g(b_{2,j})$?
     \item Are there infinitely many $j$ such that $h(b_{1,j}) < h(b_{1,j+1})$?
     \item Are there infinitely many $j$ such that $h(b_{2,j}) < h(b_{2,j+1})$?
     \item Can $g(b_{1,j+1}) - g(b_{1,j})$ grow arbitrarily large?
     \item Can $g(b_{2,j+1}) - g(b_{2,j})$ grow arbitrarily large?
\end{enumerate}

We now pivot slightly and examine what can we conclude about those $n$ where $\kl(n)>0$. 

\begin{lemma} If $\kl(n)>0$, then $H(n) > 2$. \label{kl positive forces H greater than 2}
\end{lemma}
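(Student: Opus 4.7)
The plan is to prove the contrapositive: if $H(n) \leq 2$, then $\kl(n) \leq 0$. This should be short and clean, so the main ``obstacle'' is really just making explicit the monotonicity of $H$ under divisibility.

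The key observation is that $H$ depends only on the set of prime divisors, and every factor $\frac{p}{p-1}$ exceeds $1$. So for any divisor $d$ of $n$, the primes dividing $d$ form a subset of the primes dividing $n$, hence $H(d) \leq H(n)$. This is the only structural fact one needs.

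Given this, assume $H(n) \leq 2$. Then for every $d \mid n$ we have $H(d) \leq H(n) \leq 2$, so $\log \frac{H(d)}{2} \leq 0$ and therefore each summand $d \log \frac{H(d)}{2}$ in the expression
\begin{equation*}
\kl(n) = \sum_{d \mid n} d \log \frac{H(d)}{2}
\end{equation*}
from Equation~(\ref{KL def rewritten in term of H}) is non-positive. Summing gives $\kl(n) \leq 0$. (In fact one gets a strict inequality for free from the $d=1$ term, since $H(1)=1$ gives $\log \tfrac{1}{2} < 0$, but this strengthening is not needed for the lemma as stated.) Contrapositively, $\kl(n) > 0$ forces $H(n) > 2$, which is the desired conclusion.
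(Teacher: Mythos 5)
Your proof is correct and follows essentially the same route as the paper: argue by contrapositive, use that $H(d) \leq H(n)$ for every $d \mid n$, and conclude that every summand of $\kl(n)$ has the wrong sign. If anything your treatment of the boundary case is slightly cleaner — the paper handles $H(n)=2$ separately (noting it forces $n$ to be a power of $2$ and checking $\kl(n)<0$ there), whereas your observation that non-strict inequalities already yield $\kl(n) \leq 0$, with the $d=1$ term supplying strictness, dispatches both cases uniformly.
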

\begin{proof} If $H(n) < 2$, then $H(d)< 2$ for any $d$ where $d|n$. Hence, $\log \frac{d}{2\phi(d)} < 0$ for all $d$, and thus $\kl(n) <0$. Thus, $H(n) \geq 2$. Now, note that $H(n)=2$ exactly when $n$ is a power of 2, and it is easy to check in this case that $\kl(n) <0$.
\end{proof}

\begin{proposition} If $\kl (n) >0$ then $h(n) \geq \frac{12}{\pi^2}$.\label{easy lower bound on h assuming n in A}
\end{proposition}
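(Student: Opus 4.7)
The plan is to combine Lemma \ref{kl positive forces H greater than 2} with the classical inequality $\sigma(n)\phi(n)/n^2 \geq 6/\pi^2$. By the lemma, $\kl(n) > 0$ forces $H(n) > 2$. Since $H(n) = n/\phi(n)$, we may rewrite $h(n)/H(n) = \sigma(n)\phi(n)/n^2$, so it suffices to show $\sigma(n)\phi(n)/n^2 \geq 6/\pi^2$ and then multiply by $H(n) > 2$.

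To establish this lower bound, I would expand both functions as Euler products. Writing $n = \prod_{p} p^{a_p}$, one has
\begin{equation}
\frac{\sigma(n)\phi(n)}{n^2} \;=\; \prod_{p \mid n} \frac{1 - p^{-(a_p+1)}}{1 - p^{-1}} \cdot \prod_{p \mid n}(1 - p^{-1}) \;=\; \prod_{p \mid n}\bigl(1 - p^{-(a_p+1)}\bigr).
\end{equation}
Since $a_p \geq 1$ for each $p \mid n$, each factor is at least $1 - p^{-2}$, and extending the product to all primes only makes it smaller, so
\begin{equation}
\frac{\sigma(n)\phi(n)}{n^2} \;\geq\; \prod_{p}(1 - p^{-2}) \;=\; \frac{1}{\zeta(2)} \;=\; \frac{6}{\pi^2}.
\end{equation}

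Combining the two bounds gives $h(n) = H(n)\cdot \sigma(n)\phi(n)/n^2 > 2 \cdot 6/\pi^2 = 12/\pi^2$, which is the desired inequality (in fact strictly, but the proposition only claims $\geq$).

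There is no real obstacle here: Lemma \ref{kl positive forces H greater than 2} does all the number-theoretic work, and the remaining ingredient is a standard Euler-product manipulation that appears throughout the literature on $\sigma$ and $\phi$. The only thing worth being careful about is making sure to compare $1 - p^{-(a_p+1)}$ to $1 - p^{-2}$ in the right direction (using $a_p \geq 1$ rather than any upper bound on $a_p$).
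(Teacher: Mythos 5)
Your proof is correct and follows essentially the same route as the paper: invoke Lemma \ref{kl positive forces H greater than 2} to get $H(n) > 2$, then show $h(n)/H(n) \geq \prod_{p\mid n}(1-p^{-2}) > 1/\zeta(2) = 6/\pi^2$ (the paper gets the same factor $ (p^2-1)/p^2$ by bounding $h(n) \geq \prod_{p\mid n}\frac{p+1}{p}$ and dividing by $H(n)$). The only difference is bookkeeping, not substance.
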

\begin{proof} Assume that $n \in A $. By Lemma \ref{kl positive forces H greater than 2}, we have that $H(n) \geq 2$. 

We have that $h(n) \geq \prod_{p|n}\frac{p+1}{p}$. Thus, from the definitions of $h(n)$ and $H(n)$,

\begin{equation}\frac{h(n)}{H(n)} \geq  \prod_{p|n}\frac{\left(\frac{p+1}{p}\right)}{\left(\frac{p}{p-1}\right)} =  \prod_{p|n}\frac{p^2-1}{p^2}  > \prod_{p} \frac{p^2-1}{p^2}. \label{chain to get zeta}\end{equation} Here the first two products in our chain of inequalities are over primes which divide $n$ and our last is over all primes $p$. We have that $$\prod_{p} \frac{p^2-1}{p^2} = \frac{1}{\zeta(2)} = \frac{6}{\pi^2 },$$ and thus we may use Equation (\ref{chain to get zeta}), to obtain that 

\begin{equation} h(n) > H(n)\frac{6}{\pi^2 }.   
\end{equation}
Since $H(n) > 2$, this implies the desired result. 
    
\end{proof}

We can tighten the above slightly with the next two propositions. 

\begin{proposition}  If $n$ is odd and $\kl (n)>0$ then $h(n) \geq \frac{16}{\pi^2}$. \label{lower bound for h when n odd}    
\end{proposition}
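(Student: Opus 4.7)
The plan is to follow the template of the previous proposition and simply exploit the fact that, since $n$ is odd, the prime $2$ never appears in any of the Euler-product-like expressions tied to $n$, which gives an extra multiplicative factor of $4/3$ over the bound in Proposition \ref{easy lower bound on h assuming n in A}.

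First I would invoke Lemma \ref{kl positive forces H greater than 2} to conclude $H(n) \geq 2$. Then, exactly as in the proof of Proposition \ref{easy lower bound on h assuming n in A}, I would observe $h(n) \geq \prod_{p \mid n}\frac{p+1}{p}$, so that
\begin{equation}
\frac{h(n)}{H(n)} \geq \prod_{p \mid n}\frac{p^2-1}{p^2}.
\end{equation}
The key new step is that because $n$ is odd, the product on the right runs only over odd primes dividing $n$, hence
\begin{equation}
\prod_{p \mid n}\frac{p^2-1}{p^2} \geq \prod_{p \geq 3}\frac{p^2-1}{p^2} = \left(\prod_{p}\frac{p^2-1}{p^2}\right) \cdot \frac{4}{3} = \frac{1}{\zeta(2)} \cdot \frac{4}{3} = \frac{6}{\pi^2}\cdot\frac{4}{3} = \frac{8}{\pi^2}.
\end{equation}
Combining this with $H(n) \geq 2$ yields $h(n) \geq \frac{16}{\pi^2}$, which is the claimed bound.

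There really is no serious obstacle here; the argument is a direct refinement of Proposition \ref{easy lower bound on h assuming n in A}, and the only ingredient beyond that proof is the elementary evaluation $\prod_{p\geq 3}(1-1/p^2) = \frac{4}{3\zeta(2)}$, obtained by removing the Euler factor at $p=2$ from the standard product for $1/\zeta(2)$.
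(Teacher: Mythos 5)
Your proposal is correct and is essentially identical to the paper's own argument: both proofs rerun Proposition \ref{easy lower bound on h assuming n in A} with the Euler factor $\frac{2^2-1}{2^2}=\frac{3}{4}$ removed from the product, yielding $\frac{h(n)}{H(n)} \geq \frac{8}{\pi^2}$, and then combine with $H(n)\geq 2$ from Lemma \ref{kl positive forces H greater than 2}. No issues.
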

\begin{proof} The argument is the same as in the proof of Proposition \ref{easy lower bound on h assuming n in A}, except that we assume that $2$ is not in our product. Thus, we may remove a factor of $\frac{3}{4}$ from the product, and thus we replace  $\frac{6}{\pi^2 }$ in our argument with $\frac{6}{\pi^2 }\frac{4}{3}$. 
\end{proof}

\begin{proposition} If $\kl (n)>0$, then $h(n) \geq \frac{27}{2\pi^2}$.    
\end{proposition}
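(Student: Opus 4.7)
The plan is to split on the $2$-adic valuation of $n$ and to lean on the identity
\[
\frac{h(n)}{H(n)} = \prod_{p^a \parallel n} \left(1 - \frac{1}{p^{a+1}}\right),
\]
which follows from $h(p^a) = (p^{a+1}-1)/(p^a(p-1))$ and $H(p) = p/(p-1)$.

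If $n$ is odd, Proposition \ref{lower bound for h when n odd} already gives $h(n) \geq 16/\pi^2 > 27/(2\pi^2)$. If $4 \mid n$, then the factor at $p=2$ in the displayed product is at least $7/8$, while for each odd prime with $p^a \parallel n$ the factor $(1 - 1/p^{a+1})$ is at least $(1 - 1/p^2)$. Since $\prod_{p \text{ odd}}(1 - 1/p^2) = (6/\pi^2)/(3/4) = 8/\pi^2$, this yields $h(n)/H(n) > 7/\pi^2$, and combining with $H(n) \geq 2$ (Lemma \ref{kl positive forces H greater than 2}) gives $h(n) > 14/\pi^2 > 27/(2\pi^2)$.

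The substantive case is $2 \parallel n$, so write $n = 2m$ with $m$ odd. Every divisor of $n$ is either some $d \mid m$ or $2d$ with $d \mid m$, and since $\gcd(2,d) = 1$ we have $H(2d) = 2H(d)$, hence $\log(H(2d)/2) = \log H(d)$. Direct computation then yields
\[
\kl(2m) = 3\sum_{d \mid m} d \log H(d) - (\log 2)\,\sigma(m).
\]
Using $H(d) \leq H(m)$ for every $d \mid m$, positivity of $\kl(2m)$ forces $3\sigma(m)\log H(m) > (\log 2)\sigma(m)$, so $H(m) > 2^{1/3}$, and hence $H(n) = 2H(m) > 2^{4/3}$. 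Feeding this into the Euler-product argument of Proposition \ref{easy lower bound on h assuming n in A} gives $h(n) > (6/\pi^2) \cdot 2^{4/3}$, and the check $2^{4/3} > 9/4$ (immediate from $2^{4/3} > 2.5$) yields $h(n) > 27/(2\pi^2)$.

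The main obstacle is precisely the case $2 \parallel n$: in the other two cases the hypothesis $\kl(n) > 0$ is used only through $H(n) \geq 2$, which is too weak here. One must unpack the definition of $\kl$ and exploit the coprimality of $2$ and $m$ in order to convert positivity of $\kl(n)$ into the strict numerical improvement $H(n) > 2^{4/3}$ that the bound $27/(2\pi^2)$ requires.
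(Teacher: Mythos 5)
Your proof is correct, but it follows a genuinely different route from the paper's. The paper also disposes of the odd case via Proposition \ref{lower bound for h when n odd}, but for even $n$ it splits on whether $3 \mid n$: if $6 \mid n$ then $h(n) \geq h(6) = 2$ outright, and if $3 \nmid n$ it simply deletes the Euler factor $\frac{8}{9}$ at $p=3$ from the product in Proposition \ref{easy lower bound on h assuming n in A}, gaining the factor $\frac{9}{8}$ that turns $\frac{12}{\pi^2}$ into $\frac{27}{2\pi^2}$; throughout, the hypothesis $\kl(n)>0$ is used only via $H(n) \geq 2$. You instead split on the $2$-adic valuation, and in your critical case $2 \parallel n$ you extract strictly more from the hypothesis: the identity $\kl(2m) = 3\sum_{d\mid m} d\log H(d) - (\log 2)\sigma(m)$ together with $H(d) \leq H(m)$ upgrades $H(n) > 2$ to $H(n) > 2^{4/3}$, which is a nice reusable observation in its own right. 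The trade-off is length versus strength: the paper's argument is three lines, while yours is longer but actually proves the slightly better bound $h(n) > \frac{14}{\pi^2} = \frac{28}{2\pi^2}$ (your weakest case is $4 \mid n$, giving $\frac{7}{8}\cdot\frac{8}{\pi^2}\cdot 2$; the odd and $2\parallel n$ cases give $\frac{32}{2\pi^2}$ and about $\frac{30.2}{2\pi^2}$ respectively). All the individual steps check out: $H(2d)=2H(d)$ for odd $d$, the factorization $h(n)/H(n)=\prod_{p^a\parallel n}(1-p^{-(a+1)})$, the computation $\prod_{p\ \mathrm{odd}}(1-p^{-2}) = \frac{8}{\pi^2}$, and the final comparison $2^{4/3} > \frac{9}{4}$.
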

\begin{proof} By Proposition \ref{lower bound for h when n odd}, we may assume that $n$ is even. If $3|n$, then $h(n) \geq 2$, since then $6|n$. Thus, we may assume that $3$ does not divide $n$. We then multiply our product by a factor of $\frac{9}{8}$ which gives us the desired result. 
\end{proof}

We will define the set $T$ as follows: $n$ is in $T$ if 
\begin{enumerate}
    \item $\kl(n) >0$.
    \item For any $m$ where $1 \leq m < n$, and $\kl(m) >0$, we have $h(m) > h(n)$.
\end{enumerate}

Essentially $T$ is the set elements of $A$ which set a new record for how small $h(n)$ can be and still have $\kl(n)>0$. Note that $T$ is finite if and only if there is a number $n_0$ such that $$h(n_0)= \inf_{\kl(n)>0} h(n) .$$

The first few elements of $T$, and their corresponding values of $\kl(n)$ and $h(n)$, are shown in Table \ref{table:Elements of T}. The elements of $T$ do not seem to be in the OEIS. In Appendix B, we plot the graph of points of the form $(h(n),\kl(n))$ for $n \leq 5000$.

\begin{table}[hbt!]
\centering
\begin{tabular}{|c|c|c|}
\hline
$n$&  $\kl(n)$ &  $h(n)$ \\
\hline
6 &  0.876597250733 & 2 \\
110 & 9.1306578375 & 1.96363636364 \\
130 & 7.36577620456 & 1.93846153846 \\
170  & 3.93999763019 &  1.90588235294 \\
190 & 2.2563934015 & 1.89473684211 \\
2950 & 9.51857709771 & 1.89152542373 \\
3050 & 6.68286526629 & 1.89049180328 \\
7826 & 17.0588059708 & 1.88908765653 \\
15554 & 30.9396278647 & 1.88864600746 \\
15862 & 25.8430364891 & 1.88828647081 \\
16478 & 15.6530193347 & 1.88760771938 \\
16786 & 10.5594766699  & 1.8872870249 \\
17402 & 0.375062167248 & 1.88667969199 \\
19270 & 2.4222880012 & 1.88313440581 \\
89470 & 11.9467523362 & 1.88308930368 \\ 
91310 & 3.3076383573  & 1.88299200526 \\
36984230&  25.7491698638 & 1.88295649254 \\
\hline
\end{tabular}
\caption{The first few elements of T}
\label{table:Elements of T}
\end{table}

Similarly, we may define $T_o$ as follows: $n$ is $U$ if

\begin{enumerate}
    \item $\kl(n) >0$.
    \item For any odd $m$ where $1 \leq m < n$, $m$ odd, and $\kl(m) >0$, we have $h(m) > h(n)$.
\end{enumerate}

The first few elements of $T_o$ are:
$$315, 1365, 49335, 89355, 148155, 158865,  3853815, 12109965.$$

Note that one might conjecture that the elements of $T_o$ are square-free, but 3853815 is divisible by $17^2$. The first few elements of $T_o$, along with their corresponding values of $\kl (n)$ and $h(n)$ are given in Table \ref{table:Elements of TsubO}. We strongly suspect that both $T$ and $T_0$ are infinite.

\begin{table}[hbt!]
\centering
\begin{tabular}{|c|c|c|}
\hline
$n$&  $\kl(n)$ &  $h(n)$ \\
\hline
315 &  1.22497946997 & 1.98095238095 \\
1365 & 16.4285888516 & 1.96923076923 \\
49335 & 115.855679456 & 1.9614472484 \\
89355  & 186.312257706 &  1.95964411617 \\
148155 & 296.139310959 & 1.95946137491 \\
158865 & 67.5102085587 & 1.95788877349 \\
3853815 & 430.225703476 & 1.9577566645 \\
12109965 & 1654.86520391 & 1.95773976225 \\
16100805 & 206.792005895 & 1.95758659272 \\
29220765 & 468.355284063 & 1.957533966  \\
58981755 & 3331.94336991 & 1.95752140641 \\
59320905 & 3009.99338375  & 1.95751565152 \\
61723515 & 147.109626597 & 1.95746726349 \\

\hline
\end{tabular}
\caption{The first few elements of $T_o$}
\label{table:Elements of TsubO}
\end{table}                              

We can also make an analogous definition to primitive non-deficient numbers for $KL(n)$. We first need one motivating proposition.

\begin{proposition} If $\kl(n) \geq 0$, then for any $m>1$, $\kl(mn)>0$. \label{kl(n) >0  implies kl(mn)>0}
\end{proposition}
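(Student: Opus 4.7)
The plan is to factor $\kl(n)$ so that its sign is controlled by a quantity that behaves well under divisibility. Define
$$L(n) = \sum_{d \mid n} d \log H(d), \qquad M(n) = \frac{L(n)}{\sigma(n)},$$
so that
$$\kl(n) \;=\; L(n) - \sigma(n)\log 2 \;=\; \sigma(n)\bigl(M(n) - \log 2\bigr).$$
Since $\sigma(mn) > 0$, the hypothesis $\kl(n) \geq 0$ is equivalent to $M(n) \geq \log 2$, and the target $\kl(mn) > 0$ is equivalent to $M(mn) > \log 2$. So it is enough to show that $M$ is strictly increasing under proper divisibility: if $n$ is a proper divisor of $N$, then $M(N) > M(n)$.

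The first step is to check that $M$ is additive on coprime arguments. Because $H$ is multiplicative and every divisor of $ab$ (with $\gcd(a,b)=1$) factors uniquely as $d_1 d_2$ with $d_1 \mid a$ and $d_2 \mid b$, expanding $\log H(d_1 d_2) = \log H(d_1) + \log H(d_2)$ and splitting the sum gives $L(ab) = \sigma(a) L(b) + \sigma(b) L(a)$. Dividing by $\sigma(ab) = \sigma(a)\sigma(b)$ yields $M(ab) = M(a) + M(b)$ for coprime $a,b$, which iterates to $M(n) = \sum_{p \mid n} M(p^{v_p(n)})$ for any $n$.

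The second step is a geometric-series calculation. Since $H(p^0) = 1$ and $H(p^i) = p/(p-1)$ for $i \geq 1$,
$$M(p^k) \;=\; \log\tfrac{p}{p-1}\cdot \frac{p^{k+1}-p}{p^{k+1}-1} \;=\; \log\tfrac{p}{p-1}\cdot\left(1 - \frac{p-1}{p^{k+1}-1}\right),$$
which is strictly increasing in $k$ and equals $0$ at $k=0$. Combining additivity with this monotonicity, if $n = \prod_p p^{a_p}$ and $N = \prod_p p^{b_p}$ with $a_p \leq b_p$ for every $p$, then $M(N) - M(n) = \sum_p (M(p^{b_p}) - M(p^{a_p})) \geq 0$, with strict inequality as soon as some $a_p < b_p$, i.e.\ as soon as $n$ is a proper divisor of $N$.

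Applying this with $N = mn$ (so $n < N$ because $m > 1$) gives $M(mn) > M(n) \geq \log 2$, which is exactly $\kl(mn) > 0$. There is no serious obstacle: both the product rule for $L$ and the monotonicity of $M(p^k)$ are direct computations, and the crux is simply to recognize that factoring $\kl(n)$ as $\sigma(n)(M(n) - \log 2)$ converts the problem into a monotonicity statement about an additive-on-coprimes function $M$.
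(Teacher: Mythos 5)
Your proof is correct, and it takes a genuinely different route from the paper's. The paper reduces to adjoining one prime at a time and splits into two cases according to whether $p \nmid n$ or $p \mid n$; the coprime case is a direct manipulation of the sum over divisors of $pn$, while the case $p \mid n$ requires a further sub-argument involving the $p$-free part $n_0$ of $n$. Your argument instead normalizes: writing $\kl(n) = \sigma(n)\left(M(n) - \log 2\right)$ with $M(n) = \sigma(n)^{-1}\sum_{d\mid n} d\log H(d)$ turns the claim into the statement that $M$ strictly increases along proper divisibility. That follows from the identity $M(ab) = M(a) + M(b)$ for coprime $a,b$ (a consequence of $L(ab) = \sigma(a)L(b)+\sigma(b)L(a)$, which is a correct computation since $H$ is multiplicative) together with the closed form $M(p^k) = \log\frac{p}{p-1}\cdot\bigl(1 - \frac{p-1}{p^{k+1}-1}\bigr)$, which vanishes at $k=0$ and is strictly increasing in $k$. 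This handles the cases $p \mid n$ and $p \nmid n$ uniformly, and in fact yields the stronger and cleaner statement that $\kl(N)/\sigma(N) > \kl(n)/\sigma(n)$ whenever $n$ is a proper divisor of $N$. What the paper's approach buys is that it stays close to the raw definition of $\kl$; what yours buys is a structural decomposition (an additive-on-coprimes average $M$) that isolates exactly why adjoining prime-power factors increases $\kl$, and that could be reused to quantify by how much.
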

\begin{proof} 

The claim is equivalent to showing that if $\kl(n) \geq 0$, then for any prime $p$, $\kl(np)>0$. We consider two cases, $p \nmid n$ and $p|n$.
First, let us consider the case where $p \nmid n$.

We have then $$\kl(pn) = \sum_{d|n}  d\log \frac{d}{2\phi(d)} + pd \log \frac{pd}{(p-1)\phi(d)} = \kl(n) + p\sum_{d|n} d\log \frac{pd}{(p-1)\phi(d)}.$$

Since $p\sum_{d|n} d\log \frac{pd}{(p-1)\phi(d)} > p\sum_{d|n} d\log \frac{d}{\phi(d)} \geq 0$, we are done with this case.

Now, assume that $p|n$. Set $n=n_0 p^m$ where $p\nmid n_0$. Then 
\begin{equation}\kl(np) = \kl(n) + p^{m+1}\sum_{d|n_0}d \log \frac{dp}{2\phi(d)(p-1)}. \label{kl(mn) lemma second case equation 1}\end{equation}

Now, if $\kl(n_0) \geq 0$, then the second term on the right-hand side of Equation \ref{kl(mn) lemma second case equation 1} is positive, since 
$$\sum_{d|n_0}d \log \frac{dp}{2\phi(d)(p-1)} > \kl(n),$$ and so we are done. Thus, we may assume that $\kl(n_0)<0$. However, 
in that case $$\kl(n) = \kl(n_0) + \left(\sum_{i=1}^m\right)\sum_{d|n_0} d\log \log \frac{dp}{2\phi(d)(p-1)} >0, $$
    and so $\sum_{d|n_0}d \log \frac{dp}{2\phi(d)(p-1)} >0$, and thus we are again done.
\end{proof}

Proposition \ref{kl(n) >0  implies kl(mn)>0} motivated the following definition:

\begin{definition} We say a number $n$ is $KL$-primitive number if $\kl(n) \geq 0$, and $\kl(d)<0$ for any $d|n$ with $d<n$. 
\end{definition}

The $KL$-prmitive numbers under 1500 are 

$$6,20, 28,70,88, 104, 110, 130, 136, 152, 170, 190, 315, 368, 464, 496, 572, 592,656, 688,  $$
$$ 748, 836, 884, 988, 1012, 1078, 1150, 1155, 1196, 1276, 1292, 1364,
 1365, 1450. $$

There are $KL$-primitive numbers which are themselves deficient. In particular, it is not too hard to show the following that if $k \geq 4$ and $p$ is a prime where $p=2^k + i$ for either $i=1$ or $i=3$, then $n=2^{k-1}p$ satisfies $\kl(n)>0$. Any such $n$ is itself deficient, and thus such an $n$ must have a $KL$-primitive divisor (possibly $n$ itself). However, it is unclear if there are infinitely many such numbers. The standard conjectures are that there are only finitely many primes of the form $2^k+1$, the so-called Fermat primes, but that there are infinitely many primes of the form $2^k+3$. Thus it seems safe to conjecture the following.

\begin{conjecture} There are infinitely many $n$ which are deficient and $KL$-primitive.
\end{conjecture}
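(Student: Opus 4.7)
\emph{Proof plan.} The strategy is to build an explicit infinite family of deficient numbers $n$, each with $\kl(n)>0$ and each carrying a distinct odd prime as a factor, and then to descend inside each such $n$ to a $KL$-primitive divisor. Because $h(a)\le h(ab)$ for every $b\ge 1$, every divisor of a deficient number is deficient, so any $KL$-primitive divisor obtained this way is automatically deficient. Thus the conjecture reduces to producing infinitely many deficient $n$ with $\kl(n)>0$ whose $KL$-primitive divisors are pairwise distinct.

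For the construction I would take $n=2^{k-1}p$ with $p$ an odd prime slightly larger than $2^k$. Using $H(1)=1$, $H(2^j)=2$ for $j\ge 1$, and $H(2^jp)=2p/(p-1)$ for $j\ge 1$, expanding $\kl(n)=\sum_{d\mid n}d\log(H(d)/2)$ gives
$$\kl(n) = -(p+1)\log 2 + p(2^k-1)\log\frac{p}{p-1}.$$
Combining this with the elementary estimate $\log(p/(p-1))>1/p$ yields $\kl(n)>2^k-1-(p+1)\log 2$, which is positive whenever $p<(2^k-1)/\log 2-1$, i.e.\ roughly $p<1.44\cdot 2^k$. A matching short computation with $h(n)=(2-2^{1-k})(1+1/p)$ shows $n$ is deficient exactly when $p>2^k-1$, so the admissible window for $p$ is $\bigl(2^k,\,(2^k-1)/\log 2-1\bigr)$.

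To populate this window for every large $k$, I would invoke Nagura's refinement of Bertrand's postulate: for every $N\ge 25$ there is a prime in $(N,\,6N/5)$. Applied to $N=2^k$ with $k\ge 5$ this yields a prime $p_k$ with $2^k<p_k\le (6/5)2^k$, comfortably inside the admissible window. The intervals $(2^k,(6/5)2^k)$ are pairwise disjoint across distinct $k$, so the primes $p_k$ are all distinct. Setting $n_k=2^{k-1}p_k$, we obtain infinitely many deficient $n_k$ with $\kl(n_k)>0$.

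To finish, each $n_k$ contains at least one $KL$-primitive divisor $m_k$. Since $\kl(2^j)=-\log 2<0$ for every $j\ge 0$, no power of $2$ is $KL$-primitive, so $m_k$ must be of the form $2^{j_k}p_k$ and in particular carries $p_k$ as its unique odd prime factor; distinct $p_k$ therefore force distinct $m_k$. The one step that demands real care is the lower bound on $\kl(n)$; the rest is mechanical. The mildly surprising feature of the plan is that no open prime conjecture (Fermat primes, or primes of the form $2^k+3$) is needed, because Nagura's theorem already deposits enough primes in a sufficiently narrow window just above $2^k$.
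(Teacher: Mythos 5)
Your proposal is correct, and it does more than match the paper: the statement is only a \emph{conjecture} there, with no proof offered. The authors restrict attention to $p=2^k+1$ or $p=2^k+3$, observe that $n=2^{k-1}p$ is then deficient with $\kl(n)>0$, and therefore have to lean on the (unproven) infinitude of primes of the form $2^k+3$. Your key advance is to compute the exact admissible window rather than sampling two points of it: for $n=2^{k-1}p$ the identity
\begin{equation*}
\kl(n) \;=\; -(p+1)\log 2 \;+\; p\left(2^k-1\right)\log\frac{p}{p-1}
\end{equation*}
is correct (the terms $d=2^j$, $j\ge 1$, contribute $0$ since $H(2^j)=2$, and $d=1$ contributes $-\log 2$), and together with $\log\frac{p}{p-1}>1/p$ and the deficiency criterion $p>2^k-1$ it yields the window $2^k-1<p<(2^k-1)/\log 2-1$, of relative width about $0.44$. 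Nagura's interval $(N,6N/5)$ fits inside this window already at $k=5$ (where $6\cdot 32/5=38.4<31/\log 2-1\approx 43.7$) and a fortiori for all larger $k$, so the construction is unconditional; the disjointness of the intervals $(2^k,\tfrac{6}{5}2^k)$, the fact that every divisor of a deficient number is deficient, and the fact that $\kl(2^j)=-\log 2<0$ forces any $KL$-primitive divisor of $n_k$ to carry $p_k$, are all sound, so you do obtain infinitely many distinct deficient $KL$-primitive numbers. Your window is even corroborated by the paper's own list of $KL$-primitive numbers below $1500$: it contains $592=2^4\cdot 37$ and $688=2^4\cdot 43$, with $37$ and $43$ not of the form $2^5+1$ or $2^5+3$. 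The only step worth writing out explicitly in a final version is the descent lemma --- that any $n$ with $\kl(n)\ge 0$ has a $KL$-primitive divisor, obtained by taking a divisor of $n$ minimal under divisibility among those with $\kl\ge 0$ --- but that is immediate from the definition. In short, this settles the conjecture affirmatively.
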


There are two additional questions which we are less confident of the answers.

\begin{enumerate}
    \item Are there infinitely many $n$ which are both primitive non-deficient and $KL$-primitive?
    \item Are there infinitely many $n$ which are odd, deficient and $KL$-primitive? 
\end{enumerate}

One direction for improvement of our main results is to use results in the literature which give lower bounds for the Kullback-Liebler divergence. For example,  Pinsker's inequality is a tightening of the claim that the Kullback-Liebler divergence is non-negative.

\begin{lemma}(Pinsker's Inequality) Let $P$ and $Q$ be discrete probability distributions  on the same sample space $\mathcal{X}$. Let $M=\max_{x \in X}{|P(x)- P(q)|}$. \label{Pinsker}
\begin{equation}
    D_\kl(P \parallel Q) \geq 2M^2. 
\end{equation}    
\end{lemma}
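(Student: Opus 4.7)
The plan is to reduce Pinsker's inequality to the two-outcome (Bernoulli) case and then verify that special case by a one-variable convexity argument. First, I pick any $x^* \in \mathcal{X}$ that realizes the maximum defining $M$, so $|P(x^*) - Q(x^*)| = M$, and set $p = P(x^*)$ and $q = Q(x^*)$. Partitioning $\mathcal{X}$ into the two blocks $\{x^*\}$ and $\mathcal{X}\setminus\{x^*\}$ and applying the log-sum inequality to the second block gives
\begin{equation*}
D_\kl(P \parallel Q) \;\geq\; p\log\frac{p}{q} + (1-p)\log\frac{1-p}{1-q}.
\end{equation*}
This is just the special case of the data processing inequality saying that lumping outcomes together never increases KL divergence.

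Now everything reduces to proving the Bernoulli Pinsker inequality: for all $p, q \in [0,1]$,
\begin{equation*}
p\log\frac{p}{q} + (1-p)\log\frac{1-p}{1-q} \;\geq\; 2(p-q)^2.
\end{equation*}
To handle this I fix $q$ and view the left minus the right as a smooth function $f(t)$ of $t \in (0,1)$. A direct calculation gives $f(q) = 0$ and $f'(q) = 0$, while $f''(t) = \frac{1}{t(1-t)} - 4$, which is non-negative since $t(1-t) \leq \tfrac14$. Thus $f$ is convex with a stationary point (and therefore a global minimum) at $t = q$, so $f(t) \geq 0$ for every $t$. Evaluating at $t = p$ and using $|p-q| = M$ yields $D_\kl(P \parallel Q) \geq 2M^2$, as desired.

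The main obstacle is the Bernoulli step: the convexity hinges precisely on the identity $t(1-t) \leq 1/4$, which is sharp at $t = 1/2$ and is exactly what makes the constant $2$ in Pinsker best possible. Degenerate cases ($q \in \{0,1\}$) require separate handling, but there $D_\kl(P \parallel Q)$ is either $0$ (forcing $M = 0$) or $+\infty$, so the inequality is immediate. It is worth noting that the standard version of Pinsker's inequality is stated with the total variation distance $\delta(P,Q) = \tfrac12 \sum_x |P(x) - Q(x)|$ in place of $M$; since $\delta(P,Q) \geq M$, the form stated in the lemma follows a fortiori from the standard one, and the singleton reduction above is essentially a direct proof of this weaker $M$-based form.
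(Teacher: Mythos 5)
Your proof is correct, but there is nothing in the paper to compare it against: the paper states Pinsker's inequality as a known classical result and offers no proof, citing the information-theory literature elsewhere (Tops{\o}e, Fedotov--Harremo{\"e}s--Tops{\o}e) for refinements. What you have written is the standard textbook argument --- the reduction to the Bernoulli case via the log-sum (data-processing) inequality applied to the partition $\{x^*\}$, $\mathcal{X}\setminus\{x^*\}$, followed by the second-derivative computation $f''(t) = \frac{1}{t(1-t)} - 4 \geq 0$ --- and it is complete and correct, including your handling of the degenerate cases and your closing remark that the $M$-based form in the lemma is weaker than (hence implied by) the usual total-variation form, since $M \leq \frac{1}{2}\sum_x |P(x)-Q(x)|$. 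One small point worth flagging: the lemma as printed contains a typo, $M=\max_{x \in X}|P(x)-P(q)|$, which should read $M=\max_{x \in \mathcal{X}}|P(x)-Q(x)|$; your proof correctly targets the intended statement. Note also that the constant $2$ presumes natural logarithms, consistent with the paper's usage.
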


Note that although it is not in general possible to reverse Pinsker's inequality, in the sense of bounding the Kullback-Liebler divergence by a function of the $M$, work by  Berend, Harremo{\"{e}}s, and Kontorovich \cite{BHK} shows a broad set of contexts where such a reverse inequality does apply. It is unclear if the distributions we are using satisfy their criteria.  Using Lemma \ref{Pinsker}, we obtain the following.
\begin{proposition}
If $n$ is a perfect number then
  \begin{equation}
      \kl(n) \geq \frac{(2\phi(n)-n))^2}{2n}. \label{Pinsker version of KL}
  \end{equation}    \label{Pinsker prop version of KL}
\end{proposition}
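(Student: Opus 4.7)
The plan is to apply Pinsker's inequality (Lemma \ref{Pinsker}) directly to the pair of distributions used to define $\kl(n)$ in the perfect case, namely $P(d) = d/(2n)$ and $Q(d) = \phi(d)/n$ for divisors $d \mid n$. Recall from Equation (\ref{KL for divergence}) that $D_\kl(P \parallel Q) = \kl(n)/(2n)$, so any lower bound on the Kullback--Leibler divergence immediately yields a lower bound on $\kl(n)$ upon multiplying by $2n$.

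Next, the quantity $M$ in Pinsker's inequality is a maximum of $|P(x) - Q(x)|$ over $x \in \mathcal{X}$, so it can be bounded from below by evaluating the deviation at any single convenient divisor of $n$. My choice is $d = n$ itself, since this produces the largest obvious deviation: $P(n) - Q(n) = \tfrac{1}{2} - \phi(n)/n = (n - 2\phi(n))/(2n)$. Hence $M^2 \geq (2\phi(n) - n)^2/(4n^2)$, and Pinsker's inequality gives
\[
\frac{\kl(n)}{2n} \;=\; D_\kl(P \parallel Q) \;\geq\; 2M^2 \;\geq\; \frac{(2\phi(n) - n)^2}{2n^2}.
\]
Multiplying through by $2n$ yields $\kl(n) \geq (2\phi(n) - n)^2/n$, which is in fact twice the bound stated in Equation (\ref{Pinsker version of KL}), so the proposition follows immediately.

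The only strategic decision in the argument is which divisor to use for lower-bounding $M$. The choice $d = n$ is natural because for a perfect (hence highly composite) $n$, the ratio $\phi(n)/n$ is noticeably less than $\tfrac{1}{2}$, so $P(n)$ and $Q(n)$ differ substantially; other candidates such as $d = 1$ give a deviation of only $1/(2n)$ and would not recover even the stated bound. There is no real obstacle here: the proof is essentially a one-line invocation of Pinsker's inequality once the two distributions are identified and the right test divisor is selected. The more interesting question, which the proposition does not address, is whether summing the contributions of multiple divisors (i.e.\ using the full total-variation form of Pinsker rather than the single-point max form) would give a strictly stronger inequality suitable for the applications in the preceding sections.
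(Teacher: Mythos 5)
Your proof is correct and follows essentially the same route as the paper: apply Pinsker's inequality to the distributions $P(d)=d/(2n)$ and $Q(d)=\phi(d)/n$ and lower-bound $M$ via the single divisor $d=n$. You in fact obtain the sharper constant $\kl(n)\geq (2\phi(n)-n)^2/n$, which subsumes the stated bound (the paper's own sketch asserts $M\geq (2\phi(n)-n)/n$, which appears to be off by a factor of $2$ from the $d=n$ evaluation; your computation is the careful version).
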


\begin{proof} The proof is identical to the proof of  Lemma \ref{KL is positive for any non-deficient number} except we also use that 
$$\max_{d|n} |\frac{\phi(d)}{n}- \frac{d}{2n}| \geq \frac{2\phi(n)-n}{n}.$$ \end{proof}

One question which remains open is whether Inequality \ref{Pinsker version of KL} is satisfied by all odd non-deficient numbers.

\section{Other weights and other inequalities}

Since a major aspect of our approach was to use an inequality arising from probability to understand perfect numbers and more generally non-deficient numbers, two obvious questions arise. First, can we gain other interesting results by using other weight choices on the divisors and then apply that the Kullback-Liebler divergence must be positive?  Second, can we use other functions similar to the Kullback-Liebler divergence arising from probability and information theory to prove other non-trivial inequalities related to the divisors of a number?

Regarding the first question, one can make a similar inequality using the Kullback-Liebler divergence and with any other function one likes. One just has to normalize the functions so that the relevant sum over all the divisors is 1. However, this approach worked particularly well with the choice $\frac{\phi(d)}{n}$ and perfect numbers for two reasons. First, because the $n$s canceled, and second, because one then ended up with $\log \frac {H(n)}{2}$. So it seems reasonable to look for other arithmetic functions where there is similar cancellation or where the function $f(d)$ and $\sum_{d|n}f(d)$s are particularly nice.

Given this sort of constraint, what other well-behaved probability distributions on the divisors do we have? One obvious probability distribution is the uniform distribution on the divisors. We  obtain the following result. 

\begin{proposition} Let $n$ be a positive integer, and let $\tau(n)$ be the number of positive divisors of $n$. Then 
$$\sum_{d|n}\phi(d)\log \phi(d) \geq n(\log \frac{n}{\tau(n)}).  $$ \label{consequence of using p i with phi and q i as tau}
    
\end{proposition}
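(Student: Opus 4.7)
The plan is to apply the non-negativity of Kullback--Leibler divergence to two natural probability distributions on the divisors of $n$, following the template used throughout the paper. First I would set $\mathcal{X} = \{d > 0 : d \mid n\}$ and take $P(d) = \phi(d)/n$, which is a valid probability distribution by the classical identity $\sum_{d \mid n} \phi(d) = n$ already recalled in the introduction. For the second distribution I would use the uniform distribution on the divisors, $Q(d) = 1/\tau(n)$, which is valid since $|\mathcal{X}| = \tau(n)$.

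Next I would write down the KL divergence explicitly:
\begin{equation}
D_\kl(P \parallel Q) = \sum_{d \mid n} \frac{\phi(d)}{n} \log \frac{\phi(d)/n}{1/\tau(n)} = \sum_{d \mid n} \frac{\phi(d)}{n} \log \frac{\phi(d)\,\tau(n)}{n}.
\end{equation}
Invoking $D_\kl(P \parallel Q) \geq 0$ and multiplying through by $n$ yields
\begin{equation}
\sum_{d \mid n} \phi(d) \log \phi(d) + \sum_{d \mid n} \phi(d) \log \frac{\tau(n)}{n} \geq 0.
\end{equation}
The second sum equals $n \log(\tau(n)/n)$, again by $\sum_{d \mid n} \phi(d) = n$, so rearranging gives exactly the inequality in the proposition.

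There is no real obstacle here: the argument is just a direct application of the general KL-positivity principle to the distribution pair $(\phi(d)/n,\, 1/\tau(n))$, and the ``cancellation'' the authors highlight as desirable is automatic because both $n$ and $\tau(n)$ come out as constants relative to the sum over $d$. The only mild care needed is verifying that $\phi(d) > 0$ for every $d \mid n$ so that $\log \phi(d)$ is well defined, which holds since $\phi(d) \geq 1$ for all positive integers $d$.
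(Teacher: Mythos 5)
Your proposal is correct and matches the paper's own proof essentially verbatim: both apply the non-negativity of $D_\kl(P \parallel Q)$ with $P(d) = \phi(d)/n$ and $Q$ the uniform distribution $1/\tau(n)$ on the divisors, then multiply by $n$ and rearrange using $\sum_{d\mid n}\phi(d)=n$. The only difference is cosmetic (your intermediate display even corrects a typo in the paper, which writes $\tau(d)$ where $\tau(n)$ is meant).
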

\begin{proof} Fix a positive integer $n$. We get two probability distributions on the divisors of $n$. The first, our usual $\phi(d)/n$ we assign as $p_i$, and the other, the uniform distribution we assign to $q_i$, so for all $i$, with $1 \leq i \leq \tau(n)$, we have $q_i = \frac{1}{\tau(n)}$. Then positivity of the Kullback-Liebler divergence yields that

\begin{equation}\label{Immediate version of KL with tau and phi} \sum_{d|n} \phi(d) \log  \frac{\phi(d)\tau(d)}{n}  \geq 0. \end{equation}
Inequality \ref{Immediate version of KL with tau and phi} then implies that

\begin{equation} \sum_{d|n} \phi(d) \log \phi(d) \geq \sum_{d|n} \phi(d) \log\frac{n}{\tau(n)} = n \log \frac{n}{\tau(n)}.
\end{equation}
\end{proof}

The authors are not aware of another way of proving Proposition \ref{consequence of using p i with phi and q i as tau}. Proposition \ref{consequence of using p i with phi and q i as tau} raises the question of whether it is true in general that \begin{equation}
    \phi(n)\log \phi(n) \geq n \log \frac{n}{\tau(n)} \label{false phi ineq}.
\end{equation}

If Inequality \ref{false phi ineq} were true for all $n$, then Proposition \ref{consequence of using p i with phi and q i as tau} would have very little content since it would have many extra terms in the sum on the left-hand-side. However, Inequality \ref{false phi ineq} is in fact sometimes false, for example it fails for $n=16$. 

It is not hard to also use Pinsker's inequality to prove the following stronger statement.

\begin{proposition} If $n >1$ then
$$\sum_{d|n}\phi(d)\log \phi(d) \geq n(\log \frac{n}{\tau(n)}) + \frac{n}{2 \tau(n)^2}.  $$    
\end{proposition}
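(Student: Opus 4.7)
The plan is to parallel the proof of Proposition \ref{consequence of using p i with phi and q i as tau}, strengthening it by replacing the non-negativity of $D_{\kl}$ with Pinsker's inequality (Lemma \ref{Pinsker}). I would keep the same two probability distributions on the divisors of $n$: the totient-weighted distribution $P(d) = \phi(d)/n$ and the uniform distribution $Q(d) = 1/\tau(n)$.

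First I would expand the KL divergence directly and simplify using $\sum_{d \mid n}\phi(d) = n$, obtaining
$$D_{\kl}(P \parallel Q) \;=\; \frac{1}{n}\sum_{d \mid n}\phi(d)\log\phi(d) \;-\; \log\frac{n}{\tau(n)}.$$
Next, applying Pinsker's inequality yields $D_{\kl}(P \parallel Q) \geq 2M^2$, where $M = \max_{d \mid n}\bigl|\phi(d)/n - 1/\tau(n)\bigr|$. Multiplying through by $n$ rearranges this to
$$\sum_{d \mid n}\phi(d)\log\phi(d) \;\geq\; n\log\frac{n}{\tau(n)} + 2 n M^2,$$
so the target inequality reduces to certifying that $M \geq 1/(2\tau(n))$.

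The main obstacle, and the step that requires the most care, is this lower bound on $M$. Following the pattern of the earlier Pinsker application (Proposition \ref{Pinsker prop version of KL}), I would instantiate $M$ at one or two specific divisors. The two natural witnesses are $d=1$, giving $M \geq (n - \tau(n))/(n\tau(n))$, and $d=n$, giving $M \geq |\phi(n)\tau(n) - n|/(n\tau(n))$. The $d=1$ witness succeeds precisely when $n \geq 2\tau(n)$, while the $d=n$ witness reduces to $2\phi(n)\tau(n) \geq 3n$. I expect the bulk of the work to lie in showing that for every $n > 1$ at least one of these two witnesses (or a suitable intermediate divisor) yields the required bound, with any finitely many exceptional small $n$ dispatched by direct verification. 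Once $M \geq 1/(2\tau(n))$ is established, the conclusion follows immediately from the displayed inequality above.
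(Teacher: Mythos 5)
Your approach is the same as the paper's: the paper also applies Pinsker's inequality to the pair $P(d)=\phi(d)/n$, $Q(d)=1/\tau(n)$, verifies small $n$ directly, and for larger $n$ uses exactly your $d=1$ witness, i.e.\ $M \geq \frac{1}{\tau(n)}-\frac{1}{n} \geq \frac{1}{2\tau(n)}$, which holds because $\tau(n) \leq n/2$ once $n \geq 7$ (the paper's displayed inequality at this point contains a typo, with $\frac{1}{\tau(n)}$ where $\frac{1}{2\tau(n)}$ is meant). The $d=n$ witness you mention as a backup is not needed.

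However, there is a concrete problem with the final step, shared by the paper's own proof: the ``finitely many exceptional small $n$ dispatched by direct verification'' cannot all be dispatched, because the stated inequality is false for $n=2$ and $n=3$. For $n=2$ the two distributions coincide ($P(1)=P(2)=Q(1)=Q(2)=\tfrac12$), so $D_{\kl}(P\parallel Q)=0$ and no positive additive term is available; indeed the left side is $0$ while the right side is $\tfrac14$. For $n=3$ one computes $\sum_{d\mid 3}\phi(d)\log\phi(d)=2\log 2\approx 1.386$ against $3\log\tfrac32+\tfrac38\approx 1.591$. So the proposition as stated needs its hypothesis strengthened (e.g.\ to $n\geq 4$), and any write-up must identify these exceptions rather than assume the small cases go through.
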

\begin{proof} The inequality can be verified easily for $n \leq 12$. If $n \geq 12$, then it is not hard to show that $$\frac{1}{\tau(n)} - \frac{1}{n} \geq \frac{1}{\tau(n)}. $$ We then we apply Pinsker's inequality and then proceed as in the proof of Proposition \ref{consequence of using p i with phi and q i as tau}.
\end{proof}

Unfortunately, other choices of weights seem  to lead to trivial results or to the same results as above. The other obvious weight choice for generic $n$ is $\phi(\frac{n}{d})/n$. For perfect numbers we also have the corresponding weight choice $\frac{1}{2d}$.
The other combinations of these weights do not generally result in anything interesting. For example, the choice of $p_i = \frac{1}{2d}$ and $q_i = \frac{\phi(d)}{n}$ results in the inequality 

\begin{equation} \sum_{d|n} \frac{1}{d} \log \frac{n}{2 d \phi(d)} \geq 0 \label{Inequality that is trivial for Zaremba reasons}.\end{equation}

But it is not hard to show that Inequality \ref{Inequality that is trivial for Zaremba reasons} is in fact true for all $n >3$ (without any assumption about $n$ being perfect). 

We note here one other set of weights where the apparent weakness of the result is itself conjectural. For a positive integer $n$, define $v(n)$ by
$$v(n)= \sum_{d|n, d>1} \frac{1}{d} \log \frac{\tau(n)-1}{d}. $$

\begin{proposition} Let $n$ be a perfect number. Then $v(n) > 0$.\label{first v(n) bound}
\end{proposition}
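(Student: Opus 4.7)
The plan is to recognize $v(n)$ as a Kullback--Leibler divergence between two probability distributions on the sample space $\mathcal{X}=\{d:d\mid n, d>1\}$, and then invoke positivity.

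First I would observe that since $n$ is perfect, $\sigma(n)=2n$, which rewritten gives
$$\sum_{d\mid n}\frac{1}{d} = \frac{\sigma(n)}{n} = 2.$$
Subtracting the $d=1$ term, $\sum_{d\mid n, d>1}\frac{1}{d}=1$. Thus $P(d)=\frac{1}{d}$ is a legitimate probability distribution on $\mathcal{X}$. This is essentially the same trick used in Equation (\ref{probability distribution phrasing for n perfect}), but with the roles of $d$ and $n/d$ swapped and with the unit divisor removed.

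Next, I would take $Q$ to be the uniform distribution on $\mathcal{X}$. Since $|\mathcal{X}|=\tau(n)-1$, we have $Q(d)=\frac{1}{\tau(n)-1}$ for every $d\in\mathcal{X}$. Computing the KL divergence of $P$ against $Q$ then gives
$$D_{\kl}(P\parallel Q)=\sum_{d\mid n, d>1}\frac{1}{d}\log\frac{1/d}{1/(\tau(n)-1)}=\sum_{d\mid n, d>1}\frac{1}{d}\log\frac{\tau(n)-1}{d}=v(n),$$
so the nonnegativity of the Kullback--Leibler divergence yields $v(n)\geq 0$ immediately.

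To upgrade $\geq$ to $>$, I would invoke the equality clause: $D_{\kl}(P\parallel Q)=0$ if and only if $P=Q$. For any perfect number $n$ (in particular $n\geq 6$), $\mathcal{X}$ contains at least two distinct elements $d_1<d_2$, and $1/d_1\ne 1/d_2$, so $P$ is not the uniform distribution. Hence $v(n)>0$.

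There is no real obstacle here; the only thing to check is that the weights $\{1/d\}_{d>1}$ actually sum to one, and this is precisely the perfect-number hypothesis $\sigma(n)=2n$ with the $d=1$ term pulled out. The proof closely parallels the derivation of Equation (\ref{KL for divergence}), and one should expect the condition ``$n$ perfect'' to be essential: without it, the sum $\sum_{d\mid n, d>1}\frac{1}{d}$ would need a rescaling factor $h(n)-1$, which would change the constant inside the logarithm and make the bound less clean.
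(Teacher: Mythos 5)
Your proof is correct and follows essentially the same route as the paper: put the distribution $P(d)=1/d$ on the divisors exceeding $1$ (valid exactly because $\sigma(n)=2n$), compare it to the uniform distribution on the $\tau(n)-1$ such divisors, and apply nonnegativity of the Kullback--Leibler divergence. Your explicit verification that $P$ is not uniform (hence the inequality is strict) is a small point the paper leaves implicit, but the argument is the same.
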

\begin{proof} Note that if $n$ is a perfect number then $\sum_{d|n, d>1} \frac{1}{d}=1$. Thus, if $n$ is perfect we may use as our sample space all the positive divisors greater than 1. We assign probability $\frac{1}{d}$ to $d$ for our $p_i$, while for our $q_i$, we assign the uniform distribution and so always use $q_i=\frac{1}{\tau(n)-1}$ for all $i$. (Here we have only $\tau(n)-1$ terms.) The result then follows immediately from applying the Kullback-Liebler divergence to these distributions. 
\end{proof}

The set of $n$ where $v(n) < 0$ is rare, but not non-existent. We can use the Pinsker version to tighten Proposition \ref{first v(n) bound}, but it requires some additional work. 

\begin{proposition} Let $n$ be a perfect number. Let $p$ be the smallest prime factor of $n$. Then \label{tau -1 and 1/d set with Pinsker}
\begin{equation} v(n) \geq \frac{1}{2p^2}. 
\end{equation}
\end{proposition}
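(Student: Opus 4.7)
The plan is to apply Pinsker's inequality (Lemma \ref{Pinsker}) to the same pair of distributions that underlie the proof of Proposition \ref{first v(n) bound}. Since $n$ is perfect, $\sum_{d \mid n,\, d > 1} \frac{1}{d} = 1$, so taking $P(d) = \frac{1}{d}$ and the uniform $Q(d) = \frac{1}{\tau(n)-1}$ on the $\tau(n)-1$ divisors of $n$ greater than $1$ gives two probability distributions whose Kullback-Liebler divergence is precisely $v(n)$. Pinsker's inequality then immediately yields
\[
v(n) \;\geq\; 2M^2, \qquad M \;=\; \max_{d \mid n,\, d > 1} \left|\frac{1}{d} - \frac{1}{\tau(n)-1}\right|.
\]

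The bulk of the work is extracting a sufficient lower bound on $M$. I would evaluate this maximum at $d = p$, the smallest prime factor, giving $M \geq \left|\frac{1}{p} - \frac{1}{\tau(n)-1}\right|$. The aim is to show $\tau(n) \geq 2p + 1$; this forces $\frac{1}{\tau(n)-1} \leq \frac{1}{2p}$, and hence $M \geq \frac{1}{p} - \frac{1}{2p} = \frac{1}{2p}$. Plugging back into Pinsker yields $v(n) \geq 2 \cdot \frac{1}{(2p)^2} = \frac{1}{2p^2}$, which is the desired conclusion.

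The main obstacle is the divisor-count estimate $\tau(n) \geq 2p + 1$. For even perfect numbers, the Euclid-Euler classification expresses $n$ as $2^{q-1}(2^q - 1)$ with $q$ prime and $2^q - 1$ a Mersenne prime, so $p = 2$ and $\tau(n) = 2q$; the required inequality $\tau(n) \geq 5$ then holds for every even perfect number with $q \geq 3$, and I would check the remaining case $n = 6$ by direct calculation. For a hypothetical odd perfect number, $p \geq 3$ so one needs $\tau(n) \geq 7$; since Lemma \ref{Servais's Lemma} already forces at least $p \geq 3$ distinct prime factors, and the additional multiplicities forced on odd perfect numbers by their Eulerian structure push $\tau(n)$ well past this threshold, the estimate is comfortable. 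Beyond this small-case bookkeeping, the argument is just one clean invocation of Pinsker's inequality together with the elementary estimate $\frac{1}{p} - \frac{1}{2p} = \frac{1}{2p}$.
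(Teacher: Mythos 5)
Your proposal follows essentially the same route as the paper's proof: apply Pinsker's inequality (Lemma \ref{Pinsker}) to the distributions $P(d)=\frac{1}{d}$ and $Q(d)=\frac{1}{\tau(n)-1}$ underlying Proposition \ref{first v(n) bound}, bound $M$ from below by evaluating at $d=p$, and reduce the whole problem to showing $\tau(n)-1\geq 2p$. For odd perfect $n$ your argument is exactly the paper's: Lemma \ref{Servais's Lemma} gives $k\geq p$ distinct prime factors, hence $\tau(n)\geq 2^k\geq 2^p\geq 2p+1$ for $p\geq 3$; you do not need the vaguer appeal to ``additional multiplicities forced by the Eulerian structure,'' since the crude bound $\tau(n)\geq 2^k$ already suffices for every odd $p$, not just $p=3$. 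For even perfect $n=2^{q-1}(2^q-1)$ with $q\geq 3$ the computation also goes through, since $M\geq \frac{1}{2}-\frac{1}{2q-1}\geq \frac{3}{10}$ and $2\left(\frac{3}{10}\right)^2=\frac{9}{50}>\frac{1}{8}$.

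The genuine gap is the case $n=6$, which you defer to ``direct calculation.'' That calculation does not come out the way you expect. With the natural logarithm (which is what the paper uses throughout, as one can confirm from its tabulated value $\kl(6)\approx 0.8766$), one finds $v(6)=\frac{1}{2}\log\frac{3}{2}+\frac{1}{3}\log 1+\frac{1}{6}\log\frac{1}{2}\approx 0.0872$, whereas $\frac{1}{2p^2}=\frac{1}{8}=0.125$. So the claimed inequality actually fails at $n=6$, and no refinement of the Pinsker step can rescue it there: $\tau(6)-1=3<4=2p$, and Pinsker only yields $v(6)\geq 2\left(\frac{1}{6}\right)^2=\frac{1}{18}$. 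In fairness, the paper's own proof has the identical blind spot, dismissing the even case as ``a straightforward computation using the Euclid--Euler theorem''; that computation works for $q\geq 3$ but not for $q=2$. Your argument, like the paper's, correctly proves the bound for every perfect number other than $6$, but the statement as written needs $n=6$ excluded, and asserting that the excluded case ``checks out by direct calculation'' is a step that would fail.
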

\begin{proof} For even $n$, this is a straightforward computation using the Euclid-Euler theorem for even perfect numbers. We thus may assume that $n$ is odd. We want to get a lower bound for $M$ in the Pinsker inequality. Let $k$ be the number of distinct prime factors of $n$, and let $p$ be the smallest prime factor of $n$. Then by Lemma \ref{Servais's Lemma}, $k \geq p$, and thus $\tau(n) -1 \geq 2^p -1$. Therefore,  we have $M \geq \frac{1}{2p}$, and the result follows.    
\end{proof}

However, the content of Proposition \ref{tau -1 and 1/d set with Pinsker} is empirically still weak. In fact, we conjecture the following:

\begin{conjecture} Assume that $n$ is a positive integer and assume that  $$n \not \in \{1,12,24,30,36,48,60,72,120,180,240,360\}.$$ Let $p$ be the smallest prime factor of $n$. Then $v(n) \geq \frac{1}{p^2}$.
\end{conjecture}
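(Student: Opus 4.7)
The plan is to isolate the term indexed by the smallest divisor $d = p$ in the definition of $v(n)$, show that it carries the dominant contribution, and then control the remainder. Writing
\[
v(n) = \frac{1}{p}\log\frac{\tau(n)-1}{p} \; + \sum_{\substack{d \mid n \\ d > p}} \frac{1}{d}\log\frac{\tau(n)-1}{d},
\]
the main term alone exceeds $1/p^2$ whenever $\tau(n)-1 \ge p\,e^{1/p}$, which is a very mild condition on $n$. Thus, for $n$ with moderately many divisors, the entire task reduces to showing that the tail does not cancel this main term.

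For the tail, separate the nonnegative subsum over divisors $d$ with $p < d \le \tau(n)-1$ (which can only help) from the negative subsum over $d > \tau(n)-1$. For the latter, I would pair each large divisor $d$ with its complement $n/d$; the substitution $d = n/d'$ converts $\tfrac{1}{d}\lvert\log(d/(\tau(n)-1))\rvert$ into a quantity controlled by a sum over small complementary divisors $d'$, which an integral comparison bounds. The expected outcome is that the tail is $o(1/p^2)$ once $n$ is sufficiently large in terms of $p$, so that the main term dominates.

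For $n$ non-deficient, a cleaner route is available via a renormalized KL argument. Set $s(n) = h(n)-1 \ge 1$ and define $P(d) = 1/(d\,s(n))$, $Q(d) = 1/(\tau(n)-1)$ on the divisors $d > 1$. A direct computation gives the identity
\[
v(n) = s(n)\log s(n) + s(n)\,D_\kl(P \parallel Q),
\]
and Pinsker's inequality then yields $v(n) \ge s(n)\log s(n) + 2s(n)M^2$ with $M \ge \lvert 1/(p\,s(n)) - 1/(\tau(n)-1)\rvert$. Combined with Lemma \ref{Servais's Lemma}, which forces $\tau(n) \ge 2^{p}$ for non-deficient $n$ and thus makes $M$ of order $1/p$, this closes the non-deficient case with room to spare. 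The remaining small-$\tau(n)$ non-deficient cases (prime powers and squarefree products of very few primes) are to be handled by a direct finite computation, which simultaneously confirms (or updates) the stated exceptional set.

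The main obstacle is the deficient regime with small $\tau(n)$. There $s(n) < 1$ makes $s(n)\log s(n) < 0$, breaking the renormalization argument, while simultaneously the main-term bound $\tfrac{1}{p}\log((\tau(n)-1)/p) \ge 1/p^2$ is tight enough that the tail estimate must be essentially sharp. Closing this regime will likely require either a stand-alone inequality of the form $v(n) \ge c\log\tau(n)/p$ valid for all $n$ beyond a computable threshold, or a more refined partition of the divisors (e.g.\ into ``near-average'' and ``extreme'' parts) with separate bounds on each piece, together with a complete finite check that verifies no additional members need be adjoined to the listed exceptional set.
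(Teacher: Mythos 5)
This statement is stated in the paper as a conjecture; the paper offers no proof of it, so there is no argument of the authors' to compare yours against. The question is therefore only whether your proposal actually establishes the claim, and it does not: it is a research plan, not a proof. You yourself flag the decisive gap at the end — the deficient regime with small $\tau(n)$, where $s(n)=h(n)-1<1$ kills the renormalized KL argument and the main term $\frac{1}{p}\log\frac{\tau(n)-1}{p}$ is too close to $\frac{1}{p^2}$ for a crude tail bound to survive. The tail estimate via pairing $d$ with $n/d$ is never carried out, and the finite verification that the exceptional set is exactly $\{1,12,24,\dots,360\}$ is deferred. Since the conjecture's entire content lives in the cases your outline does not close, nothing is proved.

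Two further points, one positive and one cautionary. The identity $v(n)=s(n)\log s(n)+s(n)\,D_\kl(P\parallel Q)$ with $P(d)=\frac{1}{d\,s(n)}$ on the divisors $d>1$ is correct (one checks $\sum_{d\mid n,\,d>1}\frac{1}{d\,s(n)}=1$), and it immediately generalizes Proposition \ref{first v(n) bound} from perfect $n$ to all non-deficient $n$; that is a genuinely useful observation and the natural extension of the paper's method. However, your claim that the Pinsker step then closes the non-deficient case ``with room to spare'' is too optimistic: taking $M\ge\bigl|\frac{1}{p\,s(n)}-\frac{1}{\tau(n)-1}\bigr|$ with only $\tau(n)\ge 2^{p}$ from Lemma \ref{Servais's Lemma}, the resulting bound $s(n)\log s(n)+2s(n)M^2$ falls short of $\frac{1}{p^2}$ for $p=3$ and $s(n)$ slightly above $1$ when $\tau(n)$ is near its minimum (e.g.\ $\tau(n)-1=7$ gives $2M^2\approx 0.073<1/9$), and it can even have $\frac{1}{p\,s(n)}-\frac{1}{\tau(n)-1}\le 0$ for $p=2$. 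You would need either a sharper lower bound on $\tau(n)$, a contribution to $M$ from other divisors, or a more careful interplay with the $s(n)\log s(n)$ term. Compare the paper's own Proposition \ref{tau -1 and 1/d set with Pinsker}, which runs essentially this Pinsker argument for perfect $n$ and only obtains $v(n)\ge\frac{1}{2p^2}$ — half of what the conjecture asserts — which is a good indication that the conjectured constant is beyond what this method yields without new input.
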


Regarding the second question, the authors do not know whether we can use other
 functions similar to the Kullback-Liebler divergence arising from probability and information theory to prove other non-trivial inequalities related to the divisors of a number. The set of such inequalities in the literature is vast, and we mention only a few of them here.

There is one example here that seems like a somewhat obvious choice that does not seem to work. However, we are at least in the nice position of being able to explain in part why it does not work. 

%There are also two variants of Pinsker's inequality which are worth noting. 

Tops{\o}e \cite{Topsoe} investigated a variation of Kullback-Liebler divergence which had the virtue of being a metric. This variant was also studied by Endres and Schind \cite{Endres and Schind}. Given two distributions, $P$ and $Q$ on a discrete sample space $X$, define $\dest$ by 
\begin{equation}
    \dest = \sqrt{\sum_{x \in X} P(x) \log \frac{2P(x)}{P(x) + Q(x)} +  Q(x) \log \frac{2Q(x)}{P(x) + Q(x)}}.
\end{equation}

At first sight, $\dest$ looks very promising since it is a metric and  has a symmetry that $D_{\kl}$ lacks. However, it turns out to be not useful for our purposes. In particular, if one plugs into $D_{\kl}$ values which are not from a probability distribution, then one can have a value of $D_{\kl}$ which is negative. In contrast, $\dest$ is always non-negative even for such choices since for all positive $p$ and $q$ we have

$$p \log \frac{2p}{p+q} + q \log \frac{2q}{p+q} \geq  0,$$
with equality if and only if $p=q$. Thus, the non-negativity of $\dest$ cannot tell us anything useful by itself.  However, it is plausible that there are non-trivial lower bounds for $\dest$ that can be used to get a non-trivial result here. 

There are also some similar inequalities proven by Dragomir and Glu{\u{s}}{\u{c}}evi{\'{c}} \cite{Dragomir and G} which do look promising. That work contains both other lower bounds for the Kullback-Liebler divergence as well as non-trivial upper bounds for the Kullback-Liebler divergence. The earlier mentioned bounds of Berend, Harremo{\"{e}}s and Kontorovich  may also be of interest.

Tops{\o}e \cite{Topsoe} also introduced tighter version of Pinsker's inequality. The inequality he introduced is always at least as tight as Pinsker's inequality, and often is much tighter. However, estimating the relevant quantities in Tops{\o}e's inequality seem to be difficult in the context of sums involving divisors similar to ours. There is also another set of refinements of Pinsker's inequality by Fedotov,  Harremo{\"{e}}s and Tops{\o}e \cite {FHT}.
\newpage 
\appendix
\section{Appendix: Tables of sequences}
\FloatBarrier 
\begin{table}[hbt!]
\centering
\begin{tabular}{|c|c|c|}
\hline
$n$&  $g(n)$ &  $h(n)$ \\
\hline
945 &  5.76390980585227 & 2.03174603174603\\
1575 & 7.44117556236916 & 2.04698412698413 \\
2205 & 8.80451766140542 & 2.01632653061225 \\
2835  & 9.98338463398060 & 2.04867724867725 \\
3465 & 23.9136018107373 & 2.16103896103896 \\
5775 & 30.8723271869404 & 2.06129870129870 \\
8085 & 36.5286301455790 & 2.03042671614100 \\
10395 & 41.4195733281682 & 2.21645021645022 \\
12705, & 45.7910612353820 & 2.00991735537190 \\
15015 & 74.3510943122102 & 2.14825174825175 \\
19635 & 77.9992365432936 & 2.11214667685256 \\
21945 & 79.9844749362677 & 2.09979494190021 \\
25935 & 80.7547273307130 & 2.07287449392713 \\
26565 & 84.0405480747426 & 2.08153585544890 \\
31395 & 84.6081564871294 & 2.05484949832776 \\ 
33495 & 90.0783807861720 & 2.06359158083296\\
35805 &  92.0486633745241 & 2.05915374947633\\
42315 & 92.3177554091127 & 2.03275434243176 \\ 
42735 & 97.7993295018012 & 2.04871884871885 \\
45045&  128.779872947093 & 2.32727272727273\\
58905 & 135.098640644568 & 2.28815889992361 \\ 
65835 & 138.537174406335 & 2.27477785372522 \\
75075 &  166.254101083600 & 2.21986013986014 \\
98175 & 174.411595103890 & 2.18255156608098 \\ 
105105 & 196.714505255617 & 2.18661338661339 \\
135135&  223.053282936631 & 2.38694638694639 \\
165165 & 246.594682585929 & 2.16452638270820 \\ 
195195 & 268.076682929533 & 2.16005532928610 \\
225225&  287.960550043487 & 2.40484848484848\\
255255 & 388.870712612747 & 2.27461949814891 \\ 
285285 & 401.440598235671 & 2.26131762973868 \\
345345 & 426.206983467123 & 2.24165399817574 \\
373065 & 426.744774912344 & 2.22331229142375 \\ 
435435 & 461.826570490029 & 2.22232939474319 \\
465465&  473.251682636947 & 2.21755019174374 \\
555555 & 506.231475297111 & 2.20631260631261 \\ 
 615615 & 527.211844860323 & 2.20064813235545 \\
\hline
\end{tabular}
\caption{The first few elements of $B_1$}
\label{table:Elements of B_1}
\end{table} 

\FloatBarrier 

\begin{table}[hbt!]
\centering
\begin{tabular}{|c|c|c|}
\hline
$n$&  $g(n)$ &  $h(n)$ \\
\hline
945 &  5.76390980585227 & 2.03174603174603\\
1575 & 7.44117556236916 & 2.04698412698413 \\
2205 & 8.80451766140542 & 2.01632653061225 \\
3465 & 23.9136018107373 & 2.16103896103896 \\
5775 & 30.8723271869404 & 2.06129870129870 \\
8085 & 36.5286301455790 & 2.03042671614100 \\
12705, & 45.7910612353820 & 2.00991735537190 \\
15015 &74.3510943122102 & 2.14825174825175 \\
19635 &  77.9992365432936 & 2.11214667685256\\
21945 & 79.9844749362677 & 2.09979494190021\\
25935 &  80.7547273307130 & 2.07287449392713\\
26565 & 84.0405480747426 & 2.08153585544890\\
31395 & 84.6081564871294 & 2.05484949832776\\
33495 & 90.0783807861720 & 2.06359158083296\\
35805 & 92.0486633745241 & 2.05915374947633\\
42315 & 92.3177554091127 & 2.03275434243176\\
42735 & 97.7993295018012 & 2.04871884871885\\
47355 & 101.495615581659 & 2.04345898004435\\
49665 & 103.303253061552 & 2.04119601328904\\
54285 & 106.840493139847 & 2.03724785852445\\
61215 & 111.961982832012 & 2.03244302866944 \\
68145 & 116.879958799534 & 2.02861545234427 \\
70455 & 118.477396689007 & 2.02750691931020 \\ 
77385 & 123.153310448193 & 2.02457840666796 \\
82005 & 126.179644884597 & 2.02290104261935 \\ 
84315 &  127.667217418345 & 2.02213129336417\\
91245 & 132.033673728308 & 2.02005589347361 \\ 
95865 & 134.869093350034 & 2.01883899233297 \\ 
102795 &  139.017351069266 & 2.01721873631986 \\ 
 112035 & 144.368194843552 & 2.01537019681350 \\ 
 116655 & 146.972350880497 & 2.01455574128841 \\
118965 & 148.257654602084 & 2.01417223553146 \\
123585 & 150.796072590133 & 2.01344823400898 \\
125895 & 152.049699369013 & 2.01310615989515\\
130515 & 154.526998031331 & 2.01245833812205\\
146685 &  162.905764961649 & 2.01051232232335\\
151305 & 165.222754737438 & 2.01003271537623\\
158235 & 168.639287831050 & 2.00936581666509\\
\hline
\end{tabular}
\caption{The first few elements of $B_2$}
\label{table:Elements of B_2}
\end{table}

\FloatBarrier 
\vspace{1 in}
\section{Graphs of $h(n)$ and $\kl(n)$}

\begin{figure}[htp]
    \centering
    \includegraphics[width=13.4 cm]{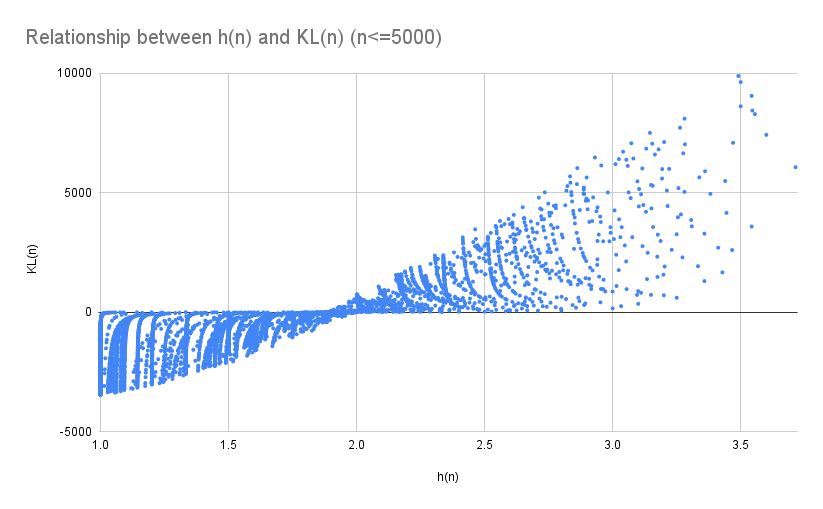}
    \caption{{$h(n)$ and $\kl(n)$}}
    \label{KL graph 1}
\end{figure}

\FloatBarrier 

\begin{figure}[htp]
    \centering
    \includegraphics[width=13.4 cm]{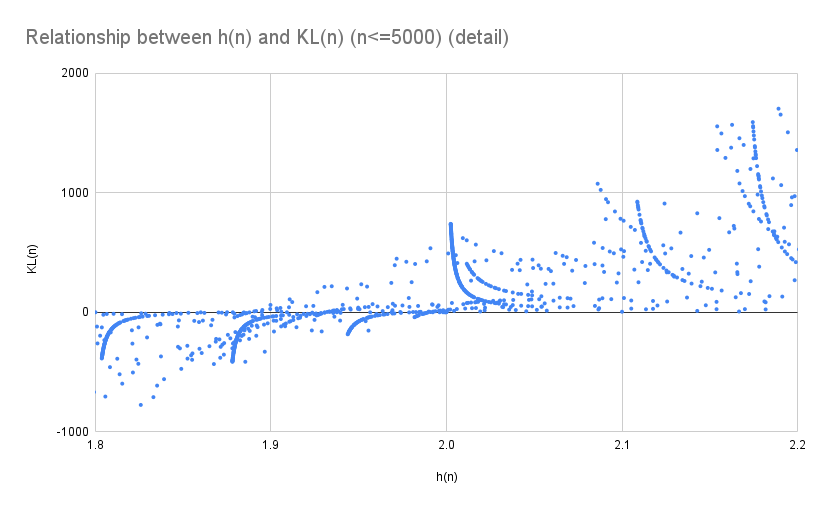}
    \caption{{$h(n)$ and $\kl(n)$ zoomed in near $h(n)=2$}}
    \label{KL graph 2}
\end{figure}

\FloatBarrier 

\begin{figure}[htp]
    \centering
    \includegraphics[width=13.4 cm]{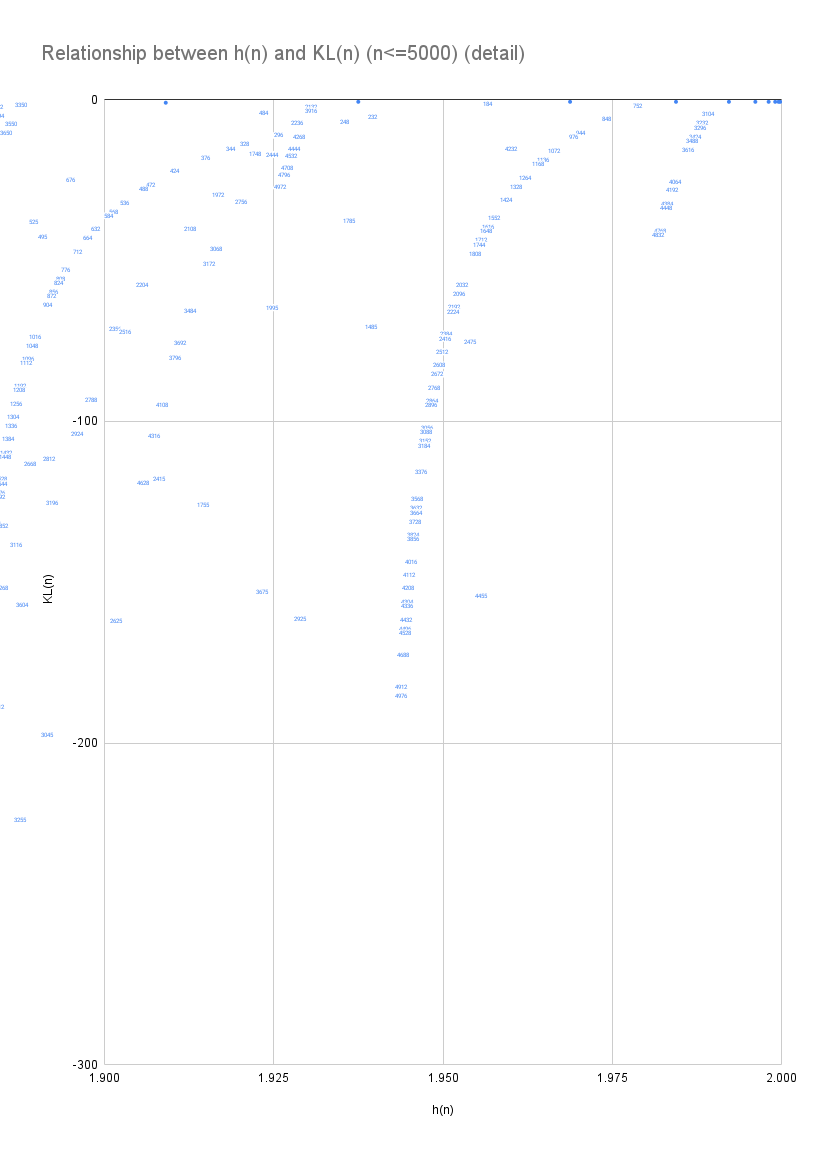}
    \caption{{$h(n)$ and $\kl(n)$ zoomed in further near $h(n)=2$}}
    \label{KL graph 3}
\end{figure}

\FloatBarrier

{\bf Acknowledgments} Dan Gries and Tuesday Mueller-Harder made helpful suggestions.

\end{document}